\documentclass[a4paper]{scrartcl} 

\usepackage[utf8]{inputenc}
\usepackage[T1]{fontenc}
\usepackage{lmodern} 
\usepackage{amssymb, amsmath, amsfonts, amsthm} 
\usepackage{mathtools} \mathtoolsset{showonlyrefs} 
\usepackage{mathrsfs} 
\usepackage{enumerate}
\usepackage{color}
\usepackage{graphicx} 
\usepackage{mdwlist} 
\usepackage{paralist} 
\usepackage{cite}
\usepackage{ulem} \normalem 
\usepackage{microtype} 
\usepackage{hyperref} \usepackage[all]{hypcap} 

\usepackage{float} 


	\hypersetup{colorlinks=false} 





	\let\oldleft\left
	\let\oldright\right
	\renewcommand{\left}{\mathopen{}\mathclose\bgroup\oldleft}
	\renewcommand{\right}{\aftergroup\egroup\oldright}

	\theoremstyle{definition}
	
	\newtheorem{Remark}[equation]{Remark}

	\theoremstyle{plain}
	\newtheorem{Theorem}[equation]{Theorem}
	\newtheorem{Lemma}[equation]{Lemma}


	\let\op\operatorname

	\renewcommand{\epsilon}{\varepsilon}



	\newcommand{\Z}{{\mathbb Z}}
	
	\newcommand{\R}{{\mathbb R}}

	\newcommand{\BigO}{{O}}

	\newcommand{\E}[1]{\mathbb E\left[#1\right]} 


	\newcommand{\eg}{e.g.\ }

	\newcommand{\acos}{\cos\inv}
	\newcommand{\asin}{\sin\inv}
	\newcommand{\atan}{\tan\inv}

	\DeclareMathOperator*{\union}{{\bigcup}} 

	\newcommand{\defeq}{\mathrel{\mathop:}=} 
	\newcommand{\inv}{^{-1}}


	\newcommand{\abs}[1]{\left|#1\right|}
	\newcommand{\norm}[1]{\left\|#1\right\|}

	\newcommand{\dfn}[1]{\textbf{#1}}

	\newcommand{\dd}[1]{\mathop{d#1}}

	\newcommand{\dmu}[2]{\mathop{d#1}(#2)}

	\newcommand{\pp}[1]{\subsection{#1}}




\newcommand{\x}{a}
\newcommand{\y}{b}
\newcommand{\z}{c}
\newcommand{\vx}{v_x}
\newcommand{\vy}{v_y}
\newcommand{\vz}{v_z}
\newcommand{\fp}[1]{\left\{#1\right\}_1}

\title{On the free path length distribution for linear motion in
  an $n$-dimensional box}

\author{Samuel Holmin \and
	Pär Kurlberg \and
	Daniel Månsson}



\begin{document}

\maketitle

\begin{abstract}
  We consider the distribution of free path lengths, or the distance
  between consecutive bounces of random particles, in an
  $n$-dimensional rectangular box.  If each particle travels a distance
  $R$, then, as $R \to \infty$ the free path lengths coincides with
  the distribution of the length of the intersection of a random line
  with the box (for a natural ensemble of random lines) and we
  give an explicit formula
  (piecewise real analytic)
  for the probability density function in dimension two and three.

  In dimension two we also consider a closely related model where each
  particle is allowed to bounce $N$ times, as $N \to \infty$, and give
  an explicit (again piecewise real analytic)
  formula for its probability density function.

  Further, in both models we can recover the side lengths of the box
  from the location of the discontinuities of the probability density
  functions.
\end{abstract}


\section{Introduction}
We consider billiard dynamics on a rectangular domain, i.e., point
shaped ``balls'' moving with linear motion with specular reflections at
the boundary, and similarly for rectangular box shaped domains in
three dimensions.  We wish to determine the distribution of free path
lengths of ensembles of trajectories defined by selecting a starting
point and direction at random.

The question seems quite natural and interesting on its own, but we
mention that it originated from the study of electromagnetic fields in
``reverberation chambers'' under the assumption of highly directional
antennas \cite{mansson-pers-comm}.  Briefly, the connection is as
follows (we refer to the forthcoming paper \cite{applied-paper} for
more details): given an ideal highly directional antenna and a highly
transient signal,
then the wave pulse dynamics is essentially the same as a point shaped billiard
ball traveling inside a chamber, with specular reflection at the
boundary.  Signal loss is dominated by (linear) ``spreading'' of the
electromagnetic field and by absorption occurring at each interaction
(``bounce'') with the walls.
The first simple model we use in this paper neglects absorption
effects, and models signal loss from spreading by simply terminating
the motion of the ball after it has travelled a certain large
distance.  The second model only takes into account signal loss from
absorption, and completely neglects spreading; here the motion is
terminated after the ball has bounced a certain number of times.

We remark that the distribution of free path lengths is very well
studied in the context of the Lorentz gas --- here a point particle
interacts with hard spherical obstacles, either placed randomly, or
regularly on Euclidean lattices; recently quasicrystal configurations
have also been studied (cf. \cite{bunimovich-sinai,
  boldrighini-bunimovich-sinai-boltzmann-lorenz,spohn-lorentz,
  golse-wennberg-lorentz-gas, bourgain-golse-wennberg-lorentz-gas,
  marklof-strombergsson-boltzman-grad-annals2011,
  marklof-strombergsson-CMP-quasicrystal,
  wennberg-quasicrystal,marklof-strombergsson-annals-2010}.)

Let $R>0$ be large and let a rectangular $n$-dimensional box
$K\subseteq \R^n$ be given, where $n\geq 2$. We send off a large number $M>0$ of particles,
each with a random initial position $p^{(i)}\in K$ chosen with respect
to a given probability measure $\mu$ on $K$, and each with a uniformly
random initial direction $v^{(i)}\in \mathbb S^{n-1}=\{x\in\R^{n}: \norm
x=1\}$, $i=1,\ldots,M$, for a total distance $R$ each. 
Each particle travels along straight lines, changing direction
precisely when it hits the boundary of the box, where it reflects
specularly. We record the distance 
travelled between each pair of consecutive bounces for each particle.
(Note in particular that we obtain more bounce lengths from some
particles than from others.) 
Let $X_{M,R}$ be the uniformly distributed random variable on
this finite set of bounce lengths of all the particles.
More precisely, a random sample of $X_{M,R}$ is obtained as follows:
first take a random i.i.d.\  sample of points (with respect to the
measure $\mu$) $p^{(1)}, \ldots, p^{(M)} \in K$, and a random sample of
directions $v^{(1)},\ldots,v^{(M)} \in \mathbb S^{n-1}$ (with respect to
the uniform measure).  Each pair $(p^{(i)},v^{(i)})$ then defines a
trajectory $T^{i}$ of length $R$, and each such trajectory gives rise
to a finite multiset $B^{i}$ of lengths between consecutive
bounces.  Finally, with $B = \union_{i=1}^{M} B^{i}$ denoting the
(multiset) union of bounce length multisets $B^{1},\ldots,B^{M}$, we
select an element of $B$ with the uniform distribution.  (That is, with
$1_{B}$ denoting the integer valued set indicator function for $B$,
and $B' = \{ x : 1_{B}(x) \geq 1 \}$ we select the element $b \in B'$
with probability $1_{B}(b)/\sum_{x \in B'} 1_{B}(x)$.)

We are interested in the distribution of $X_{M,R}$ for large $M$ and
$R$, and this turns out to be closely related to a model arising from
integral geometry.
Namely, let $\dd\ell$ denote the unique (up to a constant) translation-
and rotation-invariant measure on the set of directed lines $\ell$ in
$\R^n$, and consider the restriction of this measure to the set of
directed lines $\ell$ intersecting $K$, normalized such that it
becomes a probability measure. Denote by $X$ the random variable
$X\defeq \operatorname{length}(\ell\cap K)$ where $\ell$ is chosen at
random using this measure. 
\begin{Theorem}
\label{thm_n}
For any dimension $n\geq 2$, and for any distribution $\mu$ on the
starting points, the random variable $X_{M,R}$ converges in distribution
to the random variable $X$, as we take $R\to\infty$ followed by taking
$M\to\infty$, or vice versa.
\end{Theorem}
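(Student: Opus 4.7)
The core of the argument is a single-particle analysis as $R\to\infty$, carried out via equidistribution on a torus.  For a particle starting at $p\in K$ with direction $v\in \mathbb S^{n-1}$, the standard billiard \emph{unfolding} replaces the bouncing trajectory by the straight segment $\{p+tv : 0\le t\le R\}$ in $\R^n$, and the bounces correspond exactly to the crossings of the grid $G = \union_{i=1}^{n}\union_{k\in\Z}\{x\in\R^n : x_i = k L_i\}$, where $L_1,\ldots,L_n$ denote the side lengths of $K$.  Projecting to the torus $T = \R^n/(L_1\Z\times\cdots\times L_n\Z)$, the unfolded trajectory becomes a linear flow of direction $v$ on $T$ with Poincar\'e section $\Sigma\subset T$ the image of $G$.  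For $v$ with $\Q$-linearly independent components (a condition of full measure on $\mathbb S^{n-1}$), Weyl's theorem ensures that this linear flow is uniquely ergodic.

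The flow-invariant measure induced on $\Sigma$ is $\abs{v\cdot\hat n}\,d\mathrm{area}$, and the first-return time from $q\in\Sigma$ is precisely the length of the chord through $K$ in direction $\pm v$ starting at the folded image of $q$.  Hence, as $R\to\infty$, the empirical distribution of bounce lengths of a single trajectory converges weakly --- for \emph{any} starting point $p$ --- to a measure $\mu_v$ supported on $(0,\mathrm{diam}(K)]$ that depends only on $v$; by construction $\mu_v$ is the distribution of the chord length of $K$ in direction $v$ when the entry point is sampled with respect to cross-section area.  The same equidistribution (combined with Kac's formula) yields the companion statement $N_R/R \to \sigma(v)/\vol(K)$ for generic $v$, where $\sigma(v)$ denotes the $(n-1)$-volume of the projection of $K$ onto $v^\perp$.

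With $M$ fixed and $R\to\infty$, the random variable $X_{M,R}$ is a draw from the pooled multiset of bounce lengths, so particle $i$ contributes a fraction $N_R^{(i)}/\sum_j N_R^{(j)} \to \sigma(v^{(i)})/\sum_j \sigma(v^{(j)})$ of the total; combined with the single-particle weak convergence this gives $X_{M,R}\to \sum_{i=1}^M w_i\mu_{v^{(i)}}$ in distribution, with $w_i\propto\sigma(v^{(i)})$.  Letting $M\to\infty$ and invoking the strong law of large numbers, the limit becomes $\bigl(\int_{\mathbb S^{n-1}}\sigma(v)\mu_v\,dv\bigr)\big/\bigl(\int_{\mathbb S^{n-1}}\sigma(v)\,dv\bigr)$, which by the standard disintegration $d\ell = d\mathrm{area}_{v^\perp}\otimes dv$ of the kinematic measure on lines coincides precisely with the distribution of $X$.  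For the reverse order the argument proceeds dually: for fixed $R$ the SLLN in $M$ gives convergence of $X_{M,R}$ to the expected single-particle bounce distribution $\E{\sum_k \delta_{\ell_k}}/\E{N_R}$, and its $R\to\infty$ limit is obtained by a Fubini exchange together with the same pointwise single-particle analysis.

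The main technical obstacle I anticipate is the lack of uniformity in $v$ of the equidistribution rate: trajectories with near-rational $v$ equidistribute only slowly, so some care is needed to interchange the $v$-average (the $M\to\infty$ step) with the single-particle $R\to\infty$ limit.  Since however all bounce lengths lie in the compact interval $[0,\mathrm{diam}(K)]$ and the set of non-generic $v$ has zero measure on $\mathbb S^{n-1}$, a Portmanteau-style dominated-convergence argument should suffice.  The independence of the limit from the starting-point distribution $\mu$ reflects the fact that the single-particle equidistribution limit is pointwise in $p$ (for $p$ outside the null set of initial points whose trajectory eventually hits a corner or lower-dimensional edge).
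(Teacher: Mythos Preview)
Your proposal is correct and follows essentially the same route as the paper: unfold the billiard to a linear flow on the torus $\R^n/(L_1\Z\times\cdots\times L_n\Z)$, use equidistribution for almost every direction to obtain the single-particle limit (independent of $p$), combine with the strong law of large numbers for the $M$-average, and justify the interchange of limits by dominated convergence using the uniform bound $\ell_k\le\operatorname{diam}(K)$. The paper carries out the single-particle step by an explicit face-by-face computation (arriving at formula~\eqref{thm_n_formula}) and then separately matches it against the integral-geometry cdf, whereas you phrase the same content in dynamical language (Poincar\'e section, Kac's formula, disintegration of $d\ell$); the only slip is that the rationality condition for unique ergodicity should be on $(v_i/L_i)_i$ rather than on $(v_i)_i$, which is harmless since both are full-measure sets on $\mathbb S^{n-1}$.
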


The mean free path length has a quite simple geometric interpretation. We have
  \begin{gather} 
    \E{X} = 2\pi \dfrac{|\mathbb S^{n-1}|}{|\mathbb S^n|}\dfrac{\operatorname{Vol}(K)}{\operatorname{Area}(K)}
     = 2\sqrt{\pi} \cdot \dfrac{\Gamma(\frac{n+1}2)}{\Gamma(\frac n2)}\dfrac{\operatorname{Vol}(K)}{\operatorname{Area}(K)} \label{santalo-formula}
  \end{gather}
  where $\operatorname{Area}(K)$ is the $(n-1)$-dimensional surface
  area of the box $K$, $\operatorname{Vol}(K)$ is the volume of the
  box $K$, $\Gamma$ is the gamma function, and where
  $|\mathbb S^{n-1}|=2\pi^{n/2}/\Gamma(n/2)$ is the $(n-1)$-dimensional surface area
  of the sphere $\mathbb S^{n-1}\subseteq \R^n$.
  The formula in \eqref{santalo-formula} has been proven in a more general setting earlier (see \eg formula (2.4) in \cite{chernov-mean-free-path-billiards}); for further details, see Section~\ref{sec:discussion}. For the convenience of the reader we give a short proof of formula \eqref{santalo-formula} in our setting in Section~\ref{sec:integral-geometry-mean-value}.

Throughout the paper, we will write $\op{pdf}_Z$ and $\op{cdf}_Z$ for
the probability density function and the cumulative distribution
function of $Z$, respectively, for random variables $Z$. 
We next give explicit formulas for the probability density function of $X$
in dimensions two and three.

\begin{Theorem}
\label{thm_2}
  For a box of dimension $n=2$ with side-lengths $\x\leq\y$, the probability density function of $X$ is given by
  \begin{gather}
  \operatorname{pdf}_X(t) = \frac1{\x+\y}\cdot 
    \begin{cases}
      1,&\text{if }t<\x,\y \\
      \dfrac{\x^2\y}{t^2\sqrt{t^2-\x^2}},&\text{if }\x<t<\y \\
      -1 + \dfrac1{t^2}\left( \dfrac{\x^2\y}{\sqrt{t^2-\x^2}} + \dfrac{\x\y^2}{\sqrt{t^2-\y^2}} \right),&\text{if }\x,\y<t.
    \end{cases}
  \end{gather}
  for $0<t<\sqrt{\x^2+\y^2}$.
\end{Theorem}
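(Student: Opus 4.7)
The plan is to work directly with the integral-geometric description of $X$ given just above the theorem. Parametrize directed lines in $\R^2$ by $(\theta, s)$, where $\theta \in [0, 2\pi)$ is the direction angle and $s$ is the signed perpendicular offset from the origin; the invariant measure is then $ds\, d\theta$. By the $4$-fold symmetry (two axis reflections and a direction flip) it suffices to work with $\theta \in [0, \pi/2]$, where the total measure of lines meeting $K = [0,\x]\times[0,\y]$ reduces to $\x+\y$, and the normalized probability measure becomes $ds\,d\theta/(\x+\y)$.

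For fixed $\theta \in (0,\pi/2)$, I would first establish that the chord length $\ell(\theta,s)$, as a function of $s$, is a ``trapezoid'': supported on an interval of length $\x\sin\theta + \y\cos\theta$, with piecewise linear ramps of slope $\pm 1/(\sin\theta\cos\theta)$ and a plateau at height $\min(\x/\cos\theta, \y/\sin\theta)$. This follows from projecting the four corners of $K$ onto the perpendicular direction $(-\sin\theta,\cos\theta)$. A direct calculation then shows that for any $t$ strictly below the plateau height,
\begin{gather}
|\{s : \ell(\theta,s) > t\}| = \x\sin\theta + \y\cos\theta - t\sin(2\theta).
\end{gather}
Next, I would identify the $\theta$-range where the plateau exceeds $t$: since this is equivalent to $\cos\theta < \x/t$ \emph{and} $\sin\theta < \y/t$, the valid set of $\theta$ is all of $(0,\pi/2)$ when $t<\x$, it is $(\arccos(\x/t), \pi/2)$ when $\x \le t < \y$, and it is $(\arccos(\x/t), \arcsin(\y/t))$ when $\y \le t < \sqrt{\x^2+\y^2}$. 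This matches exactly the three pieces in the statement.

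The remaining computation is elementary. Using the antiderivative
\begin{gather}
\int \bigl(\x\sin\theta + \y\cos\theta - t\sin(2\theta)\bigr)\,d\theta = -\x\cos\theta + \y\sin\theta + \tfrac{t}{2}\cos(2\theta),
\end{gather}
together with $\sin(\arccos(\x/t)) = \sqrt{t^2-\x^2}/t$, $\cos(\arcsin(\y/t)) = \sqrt{t^2-\y^2}/t$, and $\cos(2\theta) = 1 - 2\sin^2\theta$, one obtains closed-form expressions for $1 - \op{cdf}_X(t) = P(X>t)$ in each of the three regimes; differentiating $-P(X>t)$ in $t$ produces the three-case formula in the theorem. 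The one non-routine step is the trapezoid description of $\ell(\theta,\cdot)$: one has to distinguish the ``shallow'' regime $\theta < \arctan(\y/\x)$ (plateau $\x/\cos\theta$) from the ``steep'' regime (plateau $\y/\sin\theta$) and verify that the level-set length formula is identical in both, since it is this uniformity that makes the $\theta$-integrand clean and causes the terms involving $\x^2/t$ (and later also $\y^2/t$) to cancel against the endpoint contributions, leaving precisely the expressions displayed in the theorem.
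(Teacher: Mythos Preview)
Your argument is correct and, at the level of the actual computation, coincides with the paper's: both reduce to evaluating
\[
1-\op{cdf}_X(t)=\frac{1}{\x+\y}\int_{\arccos(\min(\x/t,1))}^{\arcsin(\min(\y/t,1))}\bigl(\x\sin\theta+\y\cos\theta-2t\sin\theta\cos\theta\bigr)\,d\theta
\]
and then differentiating the resulting piecewise antiderivative. The only real difference is how the integrand is obtained. The paper quotes the general $n$-dimensional formula \eqref{thm_n_formula} established in the proof of Theorem~\ref{thm_n} (via unfolding and ergodicity, then matched with the integral-geometric measure) and specializes it to $n=2$; you instead rederive the $n=2$ integrand directly from the $(\theta,s)$ parametrization by the trapezoid description of $s\mapsto\ell(\theta,s)$, which gives the level-set length $\x\sin\theta+\y\cos\theta-t\sin2\theta$ without appeal to the billiard argument. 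Your route is more self-contained for Theorem~\ref{thm_2} alone and makes the geometry transparent, while the paper's route is shorter because the work was already done in Theorem~\ref{thm_n}.
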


\begin{Remark}
  We note that the probability density function in Theorem \ref{thm_2}
  is analytic on all open subintervals of $(0, \sqrt{\x^2+\y^2})$ not
  containing $\x$ or $\y.$ Moreover, it is constant on the interval
  $(0,\min(\x,\y))$ and has singularities of type $(t-\x)^{-1/2}$ and
  $(t-\y)^{-1/2}$ just to the right of $\x$ and $\y$, respectively.
  See Figure~\ref{fig:easy-rectangle} for more details.  For an
  explanation of these singularities, see
  Remark~\ref{rem:singularity-explained}.
\end{Remark}

\begin{figure}[H]
\label{fig:easy-rectangle}
  \centering
  \includegraphics[width=13cm]{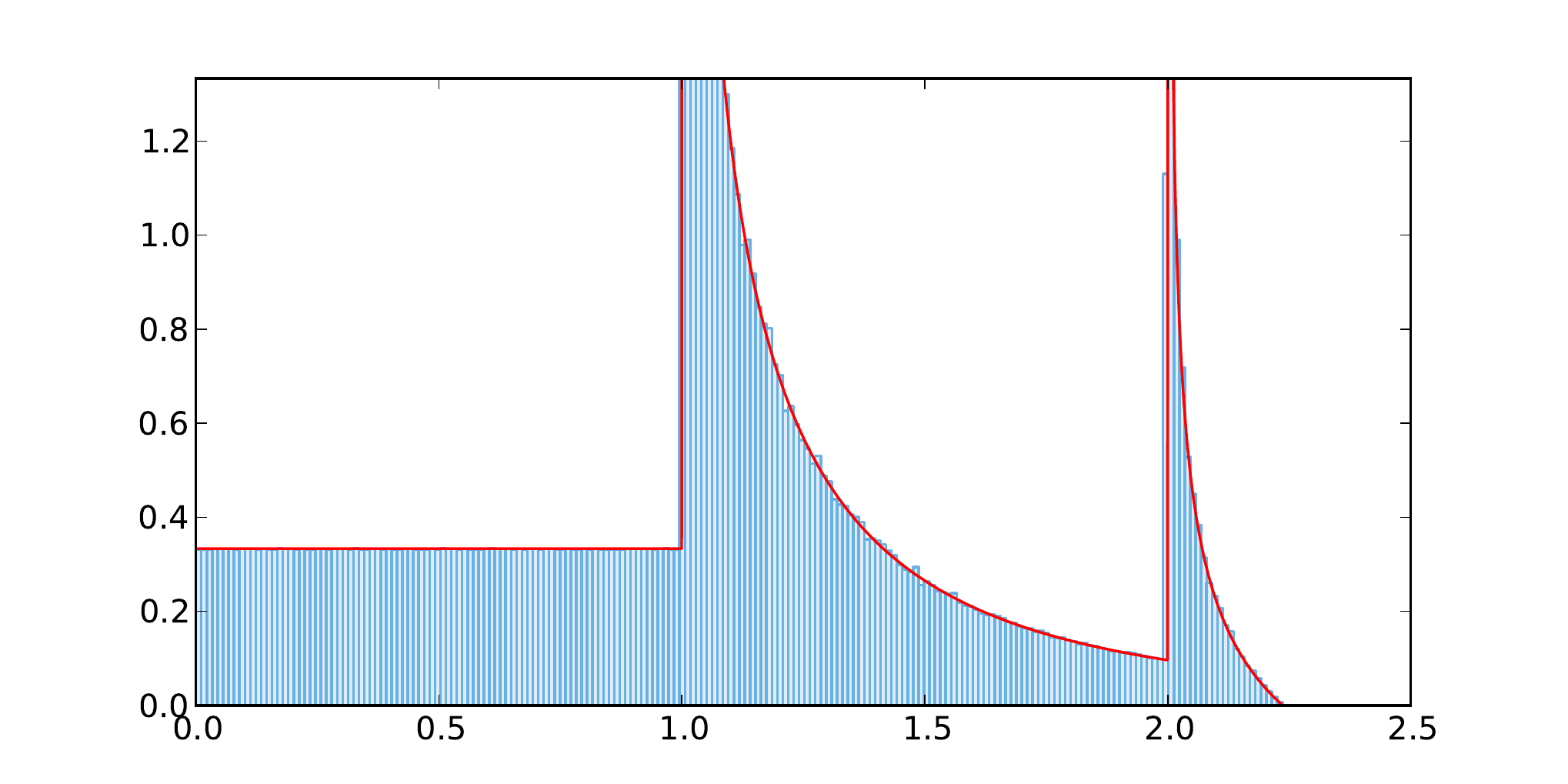}
  \caption{
    Simulation (blue histogram) vs explicit probability density
    function (red line) given by Theorem \ref{thm_2} for
    $(\x,\y)=(1,2)$. (Simulation used $10^5$ particles, each
    starting at the origin with a uniformly random direction, going
    for a total distance $1000$ each.)  The plot is cutoff at $y=1.3$
    since $\operatorname{pdf}_X(t)$ tends to infinity as $t \to 1^{+}$
    and $t \to 2^{+}$.
  }
\end{figure}

\begin{Theorem}
\label{thm_3}
  For a box of dimension $n=3$ with side-lengths $\x,\y,\z$, the probability density function of $X$ is given by
  \begin{gather}
    \operatorname{pdf}_X(t) = \dfrac{F(\x,\y,\z,t) + F(\y,\z,\x,t) + F(\z,\x,\y,t)}{3 \pi  t^3 (\x\y+\x\z+\y\z)}
  \end{gather}
  where $F$ is the piecewise-defined function given by
  \begin{gather}
    F(\x,\y,\z,t) = t^3(8\x-3t)
  \end{gather}
  for $0<t<\x$, and by
  \begin{gather}
    F(\x,\y,\z,t) =
          \left(6t^4-\x^4+6 \pi  \x^2 \y \z\right)-4(\y+\z)\sqrt{\abs{t^2-\x^2}}(\x^2+2t^2)
  \end{gather}
  for $\x<t<\sqrt{\x^2+\y^2}$,
  and by
  \begin{gather}
    F(\x,\y,\z,t) = 6 \pi  \x^2 \y \z +\y^4-3 t^4-6 \x^2 \y^2 + \\
            \sqrt{\abs{t^2-\x^2-\y^2} } 4\z\left( \x^2 + \y^2 +2 t^2\right)+\\
            +4\x\sqrt{\abs{t^2-\y^2}}(\y^2+2t^2)-12 \x^2 \y \z \cdot \arctan\left(\dfrac {\sqrt{\abs{t^2-\x^2-\y^2} }}\y\right)+\\
            -4\z\sqrt{\abs{t^2-\x^2}}(\x^2+2t^2)-12 \x \y^2 \z \cdot \arctan\left(\frac{\sqrt{\abs{ t^2-\x^2-\y^2} }}{\x}\right) 
  \end{gather}
  for $\sqrt{\x^2+\y^2}<t<\sqrt{\x^2+\y^2+\z^2}.$
\end{Theorem}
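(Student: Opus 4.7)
The approach is to compute the survival function $P(X > t)$ as an integral over $\mathbb S^2$, then differentiate to obtain the density. We parametrize directed lines as $\ell = p_\perp + \R\omega$ with $\omega \in \mathbb S^2$ and $p_\perp \in \omega^\perp$; the rotation- and translation-invariant measure factors as $d\sigma(\omega)\, dp_\perp$, and its total mass on lines meeting $K$ is $Z = 2\pi(\x\y + \x\z + \y\z)$, consistent with \eqref{santalo-formula}. Writing $A_\omega(S)$ for the area of the orthogonal projection of $S \subset \R^3$ onto $\omega^\perp$, the key geometric observation is that $\ell$ meets $K$ in a chord of length $\geq t$ if and only if $\ell$ contains two points of $K$ at signed distance $t$ along $\omega$, equivalently iff $\ell$ meets $K \cap (K - t\omega)$. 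Hence
\[
P(X \geq t) = \frac{1}{Z} \int_{\mathbb S^2} A_\omega\bigl(K \cap (K - t\omega)\bigr)\, d\sigma(\omega).
\]

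For the box, $K \cap (K - t\omega)$ is (after translation) the sub-box with side lengths $(\x - t|\omega_1|)_+$, $(\y - t|\omega_2|)_+$, $(\z - t|\omega_3|)_+$, and Cauchy's projection formula gives
\[
A_\omega\bigl(K \cap (K-t\omega)\bigr) = \sum_{i=1}^3 |\omega_i| \prod_{j \neq i}(a_j - t|\omega_j|)_+,
\]
where $(a_1,a_2,a_3) = (\x,\y,\z)$. By symmetry under $\omega_i \mapsto -\omega_i$, the integral equals $8$ times its value over the positive octant. On the hemisphere $\omega_1 > 0$, the parametrization $(u,w) := (\omega_2,\omega_3) \in D_+$ (the quarter unit disk $\{u,w \ge 0,\ u^2+w^2 \le 1\}$) satisfies $\omega_1\, d\sigma(\omega) = du\, dw$, so the $\omega_1$ prefactor in the first summand cancels and that term reduces to $J_1(t) := \int_{D_+}(\y - tu)_+(\z - tw)_+\, du\, dw$. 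Analogous substitutions yield $J_2, J_3$ by cyclic permutation of $(\x,\y,\z)$, giving $\operatorname{pdf}_X(t) = -\tfrac{8}{Z}\bigl(J_1'(t) + J_2'(t) + J_3'(t)\bigr)$.

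It remains to evaluate each $J_i$, differentiate, and assemble. For $J_1$, the effective domain is $D_+ \cap ([0,\y/t]\times[0,\z/t])$, and the case analysis is driven by whether $t < \y$, whether $t < \z$, and whether the corner $(\y/t,\z/t)$ lies inside the unit disk (equivalent to $t \lessgtr \sqrt{\y^2+\z^2}$). In the regimes where both cutoffs are active and the rectangle protrudes past the unit circle, integration over the arc-truncated region produces boundary contributions involving $\sqrt{t^2-\y^2}$, $\sqrt{t^2-\z^2}$, and $\arctan\bigl(\sqrt{t^2-\y^2-\z^2}/\y\bigr)$ --- the last being the angle at which the cutoff line $u = \y/t$ meets the unit circle --- which are precisely the non-polynomial features appearing in $F$. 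Collecting and matching
\[
-12 t^3 \bigl(J_1'(t) + J_2'(t) + J_3'(t)\bigr) = F(\x,\y,\z,t) + F(\y,\z,\x,t) + F(\z,\x,\y,t)
\]
recovers the claimed formula.

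The principal obstacle is the bookkeeping: the seven transition points $\x, \y, \z, \sqrt{\x^2+\y^2}, \sqrt{\x^2+\z^2}, \sqrt{\y^2+\z^2}, \sqrt{\x^2+\y^2+\z^2}$ partition $(0,\sqrt{\x^2+\y^2+\z^2})$, and within each sub-interval each $J_i$ independently selects one of several regimes, producing a substantial case tabulation. The regrouping into the cyclic sum of three $F$'s is largely mechanical once the integrals are in hand; continuity of $\operatorname{pdf}_X$ across interior break points, the limit $\operatorname{pdf}_X(t) \to 0$ as $t \to \sqrt{\x^2+\y^2+\z^2}^{-}$, and the normalization $\int_0^{\sqrt{\x^2+\y^2+\z^2}} \operatorname{pdf}_X(t)\, dt = 1$ all serve as consistency checks on the algebra.
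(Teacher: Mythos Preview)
Your overall strategy---express the survival function as $P(X\geq t)=\tfrac1Z\int_{\mathbb S^2}A_\omega(K\cap(K-t\omega))\,d\sigma$ and reduce to planar integrals via the substitution $\omega_i\,d\sigma=du\,dw$---is sound and in spirit the same as the paper's (which starts from the equivalent formula \eqref{cdf_formula} and integrates in spherical coordinates). However, there is a genuine error in your projection formula. When some $a_i-t|\omega_i|<0$ the intersection $K\cap(K-t\omega)$ is \emph{empty}, so $A_\omega=0$; but your expression
\[
\sum_{i}|\omega_i|\prod_{j\ne i}(a_j-t|\omega_j|)_+
\]
need not vanish: if $t|\omega_1|>\x$ while $t|\omega_2|<\y$ and $t|\omega_3|<\z$, the $i=1$ term is $|\omega_1|(\y-t|\omega_2|)(\z-t|\omega_3|)>0$. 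The correct integrand carries a \emph{global} indicator $\chi(\forall k:\,a_k\geq t|\omega_k|)$, exactly as in the paper's \eqref{cdf_formula}. Consequently your $J_1$ is missing the constraint $\omega_1\leq\x/t$, i.e.\ $u^2+w^2\geq 1-\x^2/t^2$; the effective domain should be
\[
D_+\cap([0,\y/t]\times[0,\z/t])\cap\{u^2+w^2\geq 1-\x^2/t^2\},
\]
an annular sector once $t>\x$. Your stated case analysis (only $t\lessgtr\y$, $t\lessgtr\z$, $t\lessgtr\sqrt{\y^2+\z^2}$) therefore omits the breakpoints $\x$, $\sqrt{\x^2+\y^2}$, $\sqrt{\x^2+\z^2}$ within $J_1$ itself.

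This is not merely cosmetic. In the regime where all three of $\sqrt{\y^2+\z^2},\sqrt{\x^2+\z^2},\sqrt{\x^2+\y^2}$ are below $t$, your $J_1$ becomes the full rectangular integral $\int_0^{\y/t}\int_0^{\z/t}(\y-tu)(\z-tw)\,du\,dw=\y^2\z^2/4t^2$, and cyclically for $J_2,J_3$; summing and differentiating gives a density that tends to the positive constant $\dfrac{2(\x^2\y^2+\y^2\z^2+\z^2\x^2)}{\pi(\x\y+\x\z+\y\z)(\x^2+\y^2+\z^2)^{3/2}}$ as $t\to\sqrt{\x^2+\y^2+\z^2}^{-}$, contradicting the very consistency check you propose. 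With the missing inner-disk constraint restored the computation goes through; the paper handles the analogous three simultaneous constraints via the parametrization in Lemma~\ref{param_lemma}.
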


\begin{figure}[H]
  \label{fig:easy_3d}
  \centering
  \includegraphics[width=13cm]{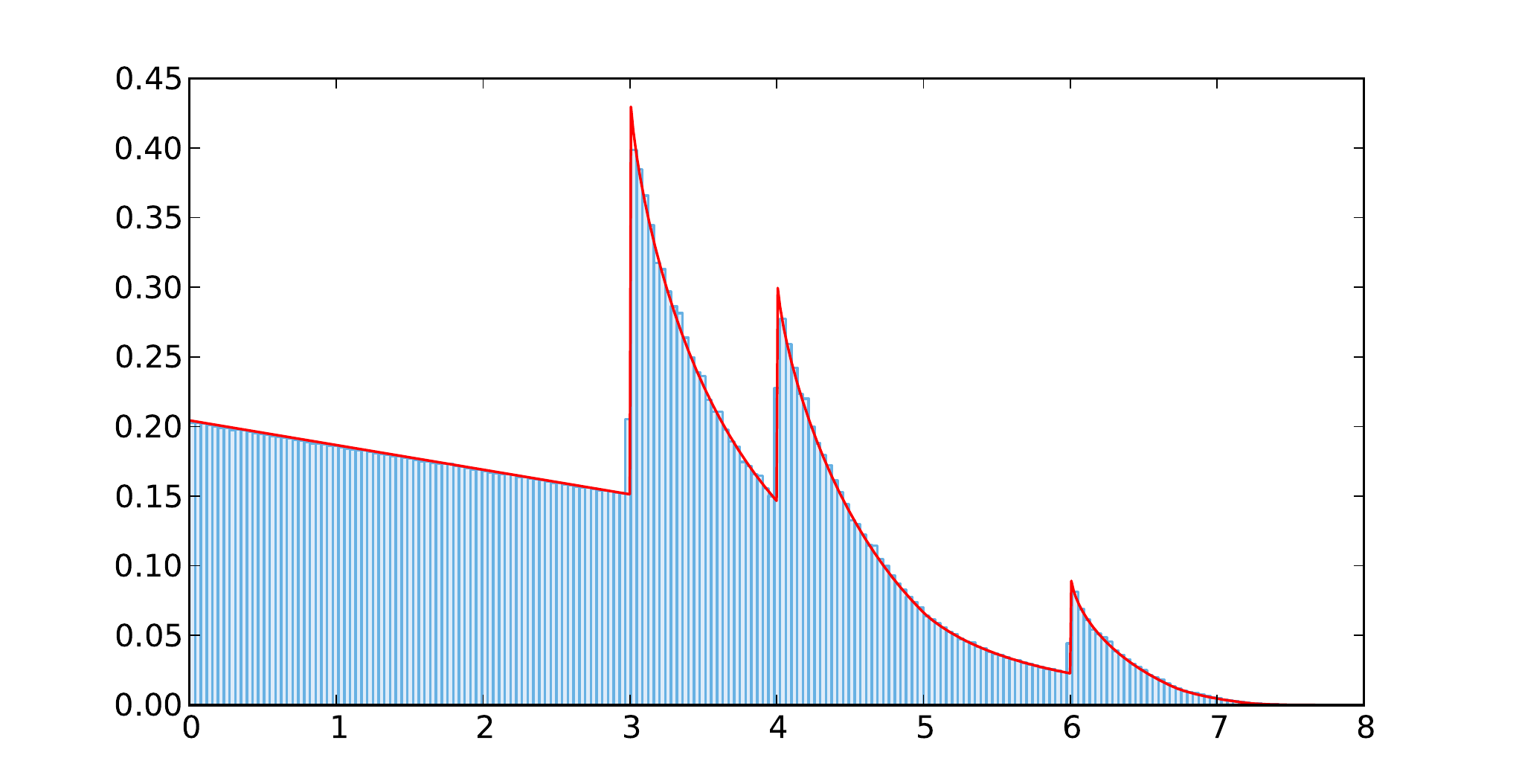}
  \caption{
    Simulation (blue histogram) vs explicit probability density
    function (red line) given by Theorem \ref{thm_3} for
    $(\x,\y,\z)=(3,4,6)$. (Simulation used $10^5$ particles, each
    starting at the origin with a uniformly random direction, going
    for a total distance $1000$ each.)  The fact that
    $\operatorname{pdf}_X(t)$ is not smooth at $t=5$ is barely
    noticeable. 
  }
\end{figure}

\begin{Remark}
  We note that the probability density function in Theorem \ref{thm_3} is analytic on all open subintervals of $(0, \sqrt{\x^2+\y^2+\z^2})$ not containing any of the points
  \begin{gather}
  \x,\y,\z,\sqrt{\x^2+\y^2}, \sqrt{\x^2+\z^2}, \sqrt{\y^2+\z^2}.
  \end{gather}
  Moreover, it is linear on the interval $(0,\min(\x,\y,\z))$ and has
  positive jump discontinuities at the points $\x,\y,\z$. At the
  points $\{\sqrt{\x^2+\y^2}, \sqrt{\x^2+\z^2},
  \sqrt{\y^2+\z^2}\}\setminus \{\x,\y,\z\}$, it is continuous and differentiable.
\end{Remark}

Note that the probability distribution $X_{M,R}$ gives a larger
``weight'' to some particles than others, since some particles get
more bounces than others for the same distance $R$. One could also
consider a similar problem where we send off each particle for a
certain number $N>0$ of bounces, and then consider the limit as
$M\to\infty$ followed by taking the limit $N\to\infty$, where $M$ is
the number of particles. This would give each particle the same
``weight''.  Denote the finite version of this distribution by
$Y_{M,N}$ and its limit distribution as $M\to\infty$ and then $N\to\infty$ by $Y$.  With
regard to the previous discussion about signal loss, we call the limit
distribution $X$ of $X_{M,R}$ the \dfn{spreading model} and we call
the limit distribution of $Y_{M,N}$ the \dfn{absorption
  model}. Determining the probability density function of the absorption model appears
to be the more difficult problem, and we give a formula only in dimension
two:

\begin{Theorem}
\label{thm_hard}
  For a box of dimension $n=2$ with side-lengths $\x\leq\y$, the random variable $Y_{M,N}$ converges in distribution
  to the random variable $Y$, as we take $M\to\infty$ followed by taking $N\to\infty$, where
  the probability density function $\op{pdf}_Y(t)$ is given by
    \begin{gather}
      \frac2\pi\Bigg(
      \frac{2 (\x+\y)}{ \left(\x^2+\y^2\right)}-\frac{2 \x \y }{\left(\x^2+\y^2\right)^{3/2}}\left(\tanh^{-1}\left(\frac{\x}{\sqrt{\x^2+\y^2}}\right) + \tanh^{-1}\left(\frac{\y}{\sqrt{\x^2+\y^2}}\right)\right)
      \Bigg)
    \end{gather}
    for $0<t<\x,\y$, and by
    \begin{gather}
      \frac{2}{\pi} \Bigg(
      \frac{\x \left(\y- \sqrt{t^{2}-\x^{2}}  \right) }
      {t  (\y+\sqrt{t^2-\x^2}) \sqrt{t^{2}-\x^{2}} } +
      \frac{2 \x \y +2\x t - 2\x\sqrt{t^2-\x^2}}{t \left(\x^2+\y^2\right)}+\\
      \frac{2 \x \y \left(-\tanh
       ^{-1}\left(\frac{t}{\sqrt{\x^2+\y^2}}\right)+\tanh ^{-1}\left(\frac{\sqrt{t^2-\x^2}
       \sqrt{\x^2+\y^2}}{t \y}\right) - \tanh^{-1}\left(\frac{\y}{\sqrt{\x^2+\y^2}}\right)\right)}{\left(\x^2+\y^2\right)^{3/2}}
       \Bigg)
    \end{gather}
    for $\x<t<\y$, and by
    \begin{gather}
      \frac{2}{\pi} \Bigg(
      \frac{\x(\y- \sqrt{t^{2}-\x^{2}})}{t  (\y+\sqrt{t^2-\x^2}) \sqrt{t^{2}-\x^{2}} } 
      +
      \frac{\y(\x- \sqrt{t^{2}-\y^{2}})}{t  (\x+\sqrt{t^2-\y^2}) \sqrt{t^{2}-\y^{2}} } 
      +
      2\frac{2\x \y- \x\sqrt{t^2-\x^2} - \y \sqrt{t^2-\y^2}}{t \left(\x^2+\y^2\right)}+ \\
       \frac{2 \x \y \left(-2\tanh
       ^{-1}\left(\frac{t}{\sqrt{\x^2+\y^2}}\right)+\tanh ^{-1}\left(\frac{\sqrt{t^2-\x^2}
       \sqrt{\x^2+\y^2}}{t \y}\right) +
        \tanh ^{-1}\left(\frac{\sqrt{t^2-\y^2}
           \sqrt{\x^2+\y^2}}{t \x}\right)
        \right)}{\left(\x^2+\y^2\right)^{3/2}}
      \Bigg)
    \end{gather}
    for $\x,\y<t<\sqrt{\x^2+\y^2}$.
\end{Theorem}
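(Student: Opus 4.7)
The plan is to unfold the billiard to linear flow on a torus, compute the asymptotic gap length distribution for a fixed direction, and integrate over directions.

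\textbf{Reduction to integration over direction.} The $M\to\infty$ step is a routine law of large numbers: $Y_{M,N}$ converges in distribution to the mixture obtained by sampling a trajectory $(p,v)$ and an index $j\in\{1,\ldots,N\}$ uniformly, then returning the $j$-th gap. Unfolding the specular reflections in the standard way replaces the billiard in $[0,\x]\times[0,\y]$ by a straight line in $\R^2$, with bounces corresponding to crossings of $\{x\in \x\Z\}\cup\{y\in \y\Z\}$; quotienting by $\x\Z\times \y\Z$ yields linear flow on the torus $\mathbb T=\R^2/(\x\Z\times \y\Z)$ with bounce section $S=\{x=0\}\cup\{y=0\}$. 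For a direction $\phi$ with irrational slope $\tan\phi$ the flow is uniquely ergodic, so Birkhoff's theorem applied to the first-return map to $S$ gives that the empirical gap distribution converges, as $N\to\infty$, to a deterministic limit $\nu_\phi$ depending only on $\phi$ (and not on the starting point). Averaging over the uniformly distributed direction then yields $Y$.

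\textbf{Gap distribution for fixed direction.} For $\phi\in(0,\pi/2)$ set $A=\x/\cos\phi$ and $B=\y/\sin\phi$. The bounce times on the unfolded trajectory form a union of two arithmetic progressions on $\R$ with common differences $A$ and $B$ and some offsets. Assuming WLOG $A<B$, between two consecutive elements of the denser progression there is at most one element of the sparser one; by Weyl equidistribution of the relative offsets, such an element is present in a proportion $A/B$ of intervals with uniformly distributed position inside. A short count of the resulting gap lengths then gives
\begin{gather}
\nu_\phi(dt)=\frac{2}{A+B}\,\mathbf{1}_{(0,\min(A,B))}(t)\,dt+\frac{|B-A|}{A+B}\,\delta_{\min(A,B)}(dt).
\end{gather}
Using the four-fold symmetry of $\nu_\phi$ in $\phi$ (induced by the two axial reflections of the rectangle), the $\phi$-average reduces to
\begin{gather}
\op{pdf}_Y(t)=\frac{2}{\pi}\int_0^{\pi/2}\nu_\phi(t)\,d\phi.
\end{gather}

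\textbf{Evaluation and main obstacle.} Split $t$ into the three ranges $(0,\x)$, $(\x,\y)$, $(\y,\sqrt{\x^2+\y^2})$, within each of which the subdomain of $\phi$ on which the indicator contributes, and the points $\phi^*$ at which the delta contributes, are explicit. Writing $\x\sin\phi+\y\cos\phi=R\sin(\phi+\chi)$ with $R=\sqrt{\x^2+\y^2}$ and $\tan\chi=\y/\x$, together with the identity
\begin{gather}
\frac{\sin\phi\cos\phi}{\sin(\phi+\chi)}=\cos(\phi-\chi)-\frac{\sin 2\chi}{2\sin(\phi+\chi)}
\end{gather}
and the antiderivative $\int\csc\theta\,d\theta=-\tanh^{-1}(\cos\theta)$, one obtains a closed form for $\int (A+B)^{-1}\,d\phi$; the delta contributions are read off via $\int f(\phi)\,\delta(t-A(\phi))\,d\phi=f(\phi^*)/|A'(\phi^*)|$ at $\phi^*=\arccos(\x/t)$ and, in the third range, also at $\arcsin(\y/t)$. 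The main technical obstacle is the algebraic reduction of the resulting expressions to the stated form: each endpoint evaluation produces an $\operatorname{arctanh}$ with a complicated argument that must be rewritten using the addition law $\tanh^{-1}(x)\pm\tanh^{-1}(y)=\tanh^{-1}((x\pm y)/(1\pm xy))$ together with trigonometric identities at $\phi_1,\phi_2$. A representative such identity is $\tanh^{-1}((\x^2-\y\sqrt{t^2-\x^2})/(tR))=\tanh^{-1}(t/R)-\tanh^{-1}(\sqrt{t^2-\x^2}\,R/(t\y))$; once identities like this (and their analogs in the third range) are established, the three case formulas fall out.
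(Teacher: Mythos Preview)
Your proof is correct and follows essentially the same route as the paper's: unfold to torus flow, use ergodicity to obtain the per-direction gap distribution as a uniform density on $(0,\min(A,B))$ plus a point mass at $\min(A,B)$ (what the paper calls the ``smooth'' and ``singular'' parts, derived there via the probabilities $P_h,P_v$ of starting on a horizontal/vertical wall), then average over the direction. The only notable difference is computational: the paper changes variable from $\varphi$ to $T=\x/\cos\varphi$ and writes down an antiderivative in $T$ directly, whereas you stay in $\varphi$ and use $\x\sin\varphi+\y\cos\varphi=R\sin(\varphi+\chi)$ together with $\int\csc\theta\,d\theta=-\tanh^{-1}(\cos\theta)$; both routes produce the same $\tanh^{-1}$ terms and require the same endpoint simplifications you flag as the main obstacle.
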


\begin{figure} \label{bounce_vs_distance}
  \centering
  \includegraphics[width=13cm]{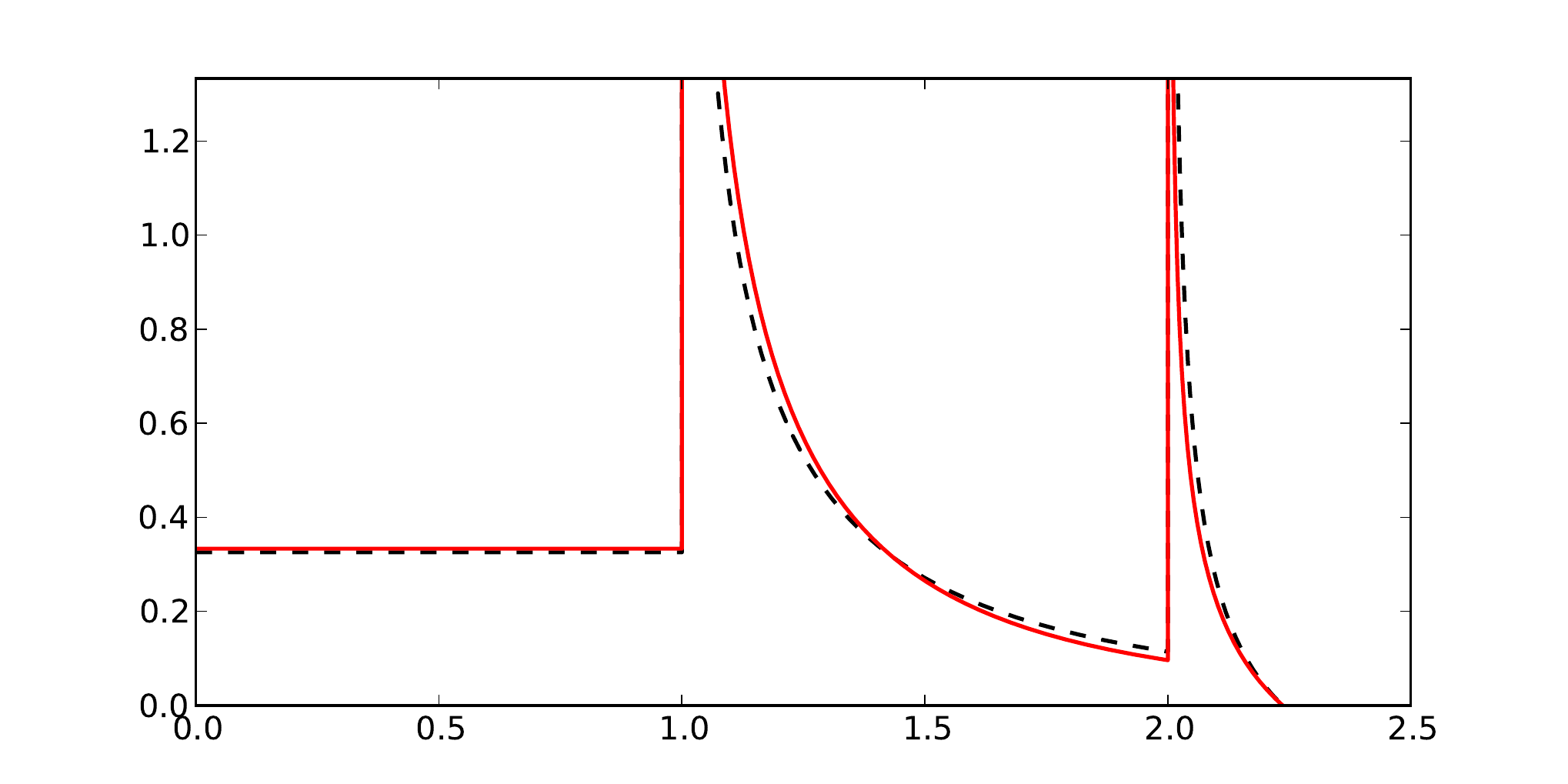}
  \caption{
    Probability density function for spreading model $X$ (red line) from Theorem \ref{thm_2} vs absorption model (black dashed line) from Theorem \ref{thm_hard}, for $(\x,\y)=(1,2)$.
  }
\end{figure}

See Figure \ref{bounce_vs_distance} for a comparison between the probability density functions for the two different models in dimension $2$.
\begin{Remark}
  It is not a priori obvious that the two limit distributions should
  differ, and it is natural to ask how much, if at all, they
  differ. We start by remarking that the expression for
  $\op{pdf}_Y(t)$ does not simplify into the expression for
  $\op{pdf}_X(t)$; indeed, for $(\x,\y)=(1,2)$ we have
  $\op{pdf}_X(t)=1/3$ but $\op{pdf}_Y(t)\approx 0.32553$ on the
  interval $(0,1)$.  For very skew boxes, with $\x=1$ and
  $\y\to\infty$, it is straightforward to show that
\begin{gather}
	\frac{\op{pdf}_Y(\y/2)} {\op{pdf}_X(\y/2)} \to \infty
\end{gather}
as $\y\to\infty$.
\end{Remark}

\subsection{Discussion}
\label{sec:discussion}

Given a closed convex subset $C \subset \R^{n}$ with nonempty interior
it is possible to define a natural
probability measure on the set of lines in $\R^{n}$ that have nonempty
intersection with $C$.  The expected length of the intersection of
a random line is then, up to a constant that only depends on $n$, 
given by $\operatorname{Vol}(C)/\operatorname{Area}(C)$; this is known
as Santalo's formula 
in the integral geometry 
and geometric probability literature (cf. \cite[Ch. 3]{santalo-book}).

A billiard flow on a manifold $M$ with boundary $\partial M$ gives
rise to a billiard map (roughly speaking, the phase space $\Omega$ is
then the collection of inward facing unit vectors $v$ at each point
$x \in \partial M$).  Given $(x,v) \in \Omega$ we define the
associated free path as the distance the billiard particle, starting
at $x$ in the direction $v$, covers before colliding with $\partial M$
again.  As the billiard map carries a natural probability measure
$\nu$ we can view the free path as a random variable, and the mean
free path is then just its expected value.  Remarkably, the mean free
path (again up to a constant that only depends on the dimension) is
then given by $\operatorname{Vol}(M)/\operatorname{Vol}(\partial M)$ --- even for non-convex
billiards.  This was deduced in the seventies at the Moscow seminar on
dynamical systems directed by Sinai and Alekseev but was never
published and hence rederived by a number of researchers.  For further
details and an interesting historical survey, see Chernov's paper
\cite[Sec.~2]{chernov-mean-free-path-billiards}.

In spirit our methods are closely related to the ones used by
Barra-Gaspard \cite{barra-gaspard-level-spacings-quantum-graphs} in
their study of the level spacing distribution for quantum 
graphs, and this turns out to be given by the distribution of return
times to a hypersurface of section of a linear flow on a torus.
In particular, for graphs with a finite number of disconnected bonds
of incommensurable lengths, the hypersurface of section is the
``walls'' of the torus, and the level spacings of the quantum graph is
exactly the same same as the free path length distribution in our
setting when all particles have the {\em same} starting velocity.  (In
particular, compare the numerator in \eqref{thm_n_formula} for $v$
fixed with \cite[Equation~(49)]{barra-gaspard-level-spacings-quantum-graphs}.)

In \cite{marklof-strombergsson-gaps-between-logs}, Marklof and
Strömbergsson used the results by Barra-Gaspard to determine the gap
distribution of the sequence of fractional parts of
$\{\log_{b} n \}_{n \in \Z^+}$.  The gap distribution depends on
whether $b$ is trancendental, rational or algebraic;  quite
remarkably the density function $P(s)$ for these gaps share a number
of qualitative features with the density function
$\operatorname{pdf}_X(s)$ for free paths in our setting.  Namely, the
density functions both have compact support and are smooth apart from
a finite number of jump discontinuities.  Further, in some cases the
density function is constant for $s$ small; compare
Figure~\ref{fig:easy-rectangle} (here $d=2$) with
\cite[Figure~4]{marklof-strombergsson-gaps-between-logs} (here
$b=\sqrt{10}$).  However, there are some important differences: for
$P(s)$, left and right limits exist at the jump discontinuities,
whereas for $d=2$, the right limit of $\operatorname{pdf}_X(s)$ is
$+\infty$ at the jumps (cf. Figure~\ref{fig:easy-rectangle}.)
Further, despite appearences, $P(s)$ is not 
linear near $s=0$
(cf. \cite[Figure~1]{marklof-strombergsson-gaps-between-logs}
corresponding to $b=e$) whereas for $d=3$, $\operatorname{pdf}_X(s)$
is indeed linear near $s=0$ (cf. Figure~\ref{fig:easy_3d}).

\subsection{Acknowledgements}
We would like to thank Z. Rudnick for some very helpful discussions,
especially for suggesting the connection with integral geometry.  We
also thank J. Marklof for bringing references
\cite{barra-gaspard-level-spacings-quantum-graphs,marklof-strombergsson-gaps-between-logs} to our attention.

{S.H. was partially supported by a grant from the
  Swedish Research Council (621-2011-5498).}
{P.K. was partially supported by grants from the G\"oran Gustafsson
  Foundation for Research in Natural Sciences and Medicine, and the
  Swedish Research Council (621-2011-5498).}

\section{Proof of Theorem \ref{thm_n}}
In this section, we prove Theorem \ref{thm_n}. 
For notational simplicity, we give the proof in dimension three; the general proof for $n\geq 2$ dimensions is analogous.

Given a particle with initial position $p$ and initial direction $v$, let $N_{R,p,v}$ be the number of bounce lengths we get from that particle as it has travelled a total distance $R>0$, and let $N_{R,p,v}(t)$ be the number of such bounce lengths of length at most $t\geq 0$. The uniform probability distribution on the set of bounce lengths of $M$ particles with initial positions $p^{(1)},\ldots,p^{(M)}$ and initial directions $v^{(1)},\ldots,v^{(M)}$ has the cumulative distribution function
\begin{gather}
\operatorname{cdf}_{X_{M,R} }(t)= 
\dfrac{\sum_{i=1}^M N_{R,p^{(i)},v^{(i)}}(t)}{\sum_{i=1}^M
  N_{R,p^{(i)},v^{(i)}}} 
= \dfrac{\frac1M\sum_{i=1}^M
  \dfrac{N_{R,p^{(i)},v^{(i)}}}R
  \dfrac{N_{R,p^{(i)},v^{(i)}}(t)}{N_{R,p^{(i)},v^{(i)}}}
}{\frac1M\sum_{i=1}^M  \dfrac{N_{R,p^{(i)},v^{(i)}}}R
}. \label{cdf_definition} 
\end{gather}
(Note that the denominator is uniformly bounded from below, which follows from equation \eqref{integrand_denominator_before_taking_limit} below.)
By the strong law of large numbers, the function \eqref{cdf_definition} converges almost
surely to
\begin{gather}
	\dfrac{\int_K\int_{\mathbb S^2} \dfrac{N_{R,p,v}}R
          \dfrac{N_{R,p,v}(t)}{N_{R,p,v}}\dd S(v)\dmu\mu
          p}{\int_K\int_{\mathbb S^2} \dfrac{N_{R,p,v}}R \dd
          S(v)\dmu\mu p } \label{cdf_integral} 
\end{gather}
as $M\to\infty$, where $d\mu$ is the probability measure with which we
choose the starting points, and $\dd S$ is the surface area measure on
the sphere $\mathbb S^2$.  By symmetry, we may restrict the inner
integrals to $\mathbb S^2_+\defeq \{(\vx,\vy,\vz)\in \mathbb S^2:
\vx,\vy,\vz> 0\}$. 
We now look at the limit of \eqref{cdf_integral} as $R\to\infty$, and
we note that since the integrands are uniformly bounded, we may move
the limit inside the integrals by the Lebesgue dominated convergence
theorem. Fix one of the integrands, and denote it by $f(R,p,v,t)$.
We will show that its limit $g(p,v,t)\defeq \lim_{R\to\infty}
f(R,p,v,t)$ exists for all $t$ and all directions $v\in \mathbb
S^2$. 
Moreover, if $p^{(i)}$ and $v^{(i)}$ denote random variables corresponding to
an initial position and an initial direction, respectively, as above, then
\begin{gather}
h(p^{(i)},v^{(i)},t) := \lim_{R \to \infty}
  \dfrac{N_{R,p^{(i)},v^{(i)}}}R
  \dfrac{N_{R,p^{(i)},v^{(i)} }(t)}{N_{R,p^{(i)},v^{(i)} }}
\end{gather}
is a random variable with
finite variance (and similarly for the terms in the denominator of
\eqref{cdf_definition}; in particular recall it is uniformly bounded
from below), 
and thus the strong law of large numbers gives that
the limit of \eqref{cdf_definition} as $R\to\infty$, and then $M \to\infty$
almost surely equals \eqref{cdf_integral}.   This shows that 
$\lim_{M\to\infty}\lim_{R\to\infty}\op{cdf}_{X_{M,R}}(t)$ exists
almost surely
and is equal to 
$\lim_{R\to\infty}\lim_{M\to\infty}\op{cdf}_{X_{M,R}}(t)$.

Consider a particle with initial position $p$ and initial direction
$v=(\vx,\vy,\vz)\in \mathbb S^2_+$. By ``unfolding'' its motion with
specular reflections on the walls of the box to the motion along a
straight line in $\R^n$ --- see Figure \ref{unfold_rgb} for a 2D
illustration --- we see that the particle's set of bounce lengths is
identical to the set of path lengths between consecutive intersections
of the straight line segment $\{p+tv: 0\leq t\leq R\}$ with any of the
planes $x=n\x, y=n\y, z=n\z$, $n\in\Z$. Thus we see that
\begin{gather} \label{integrand_denominator_before_taking_limit}
  N_{R,p,v} = R\frac{\vx}\x+R\frac{\vy}\y+R\frac{\vz}\z+\BigO(1)
\end{gather}
for
large $R$, and therefore
\begin{gather} \label{integrand_denominator}
	\dfrac{N_{R,p,v}}R\to \frac\vx\x+\frac\vy \y+\frac\vz\z
\end{gather}
as $R\to\infty$.

\begin{figure} \label{unfold_rgb}
  \centering
  \includegraphics[width=10cm]{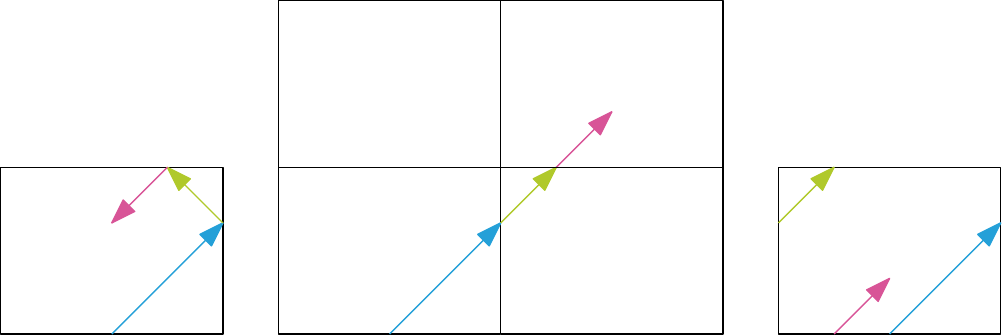}  
  \caption{From left to right: Unfolding a motion with specular reflection in a 2D box to a motion the plane and then projecting back to the box.
  }
\end{figure}

Now project the line $\{p+tv:0\leq t\leq R\}$ to the torus $\R^3/\Lambda$ where $\Lambda=\{(n_1\x,n_2\y,n_3\z): n_1,n_2,n_3\in\Z\}$ and let us identify the torus with the box $K$; see Figure \ref{unfold_rgb}. Each bounce length corresponds to a line segment which starts in one of the three planes $x=0$, $y=0$ or $z=0$ and runs in the direction $v$ to one of the three planes $x=\x, y=\y$ or $z=\z$. There are
$R\frac\vz\z+\BigO(1)$
line segments which start from the plane $z=0$, and thus the probability that a line segment starts from the plane $z=0$ is
\begin{gather}
	\dfrac{\frac\vz\z}{\frac\vx\x+\frac\vy\y+\frac\vz\z}
\end{gather}
as $R\to\infty$.  By the ergodicity of the linear flow on tori (for
almost all directions),
the starting
points of these line segments become uniformly distributed on the
rectangle $[0,\x]\times[0,\y]\times\{0\}$ for almost all
$v\in \mathbb S^2_+$ as $R\to\infty$; from here we will assume that
$v$ is such a direction, and we will ignore the measure zero set of
directions for which we do not have ergodicity. 
Consider one of these line segments and denote its length by $T$ and
its starting point by $(x_0,y_0,0)$. For an arbitrary parameter
$t\geq 0$, we have $T\leq t$ if and only if $t\vx\geq \x-x_0$ or
$t\vy\geq \y-y_0$ or $t\vx\geq \z$; the starting points
$(x_0,y_0)\in[0,\x]\times[0,\y]$ which satisfy this are precisely
those outside the rectangle $[0,\x-t\vx]\times[0,\y-t\vy]$ assuming
that $t\vz\leq \z$ and otherwise it is the whole rectangle
$[0,\x]\times[0,\y]$. The area of that region is
\begin{gather}
  \x\y-(\x-t\vx)(\y-t\vy) \label{prevcalc}
\end{gather}
if $\x\geq t\vx, \y\geq t\vy, \z\geq t\vz$ and otherwise it is $\x\y$. Since the starting points $(x_0,y_0)$ are uniformly distributed in the rectangle $[0,\x]\times[0,\y]$ as $R\to\infty$, it follows that the probability that $T\leq t$ is
\begin{gather}
	1-\dfrac{(\x-t\vx)(\y-t\vy)}{\x\y}\chi(\x\geq t\vx, \y\geq t\vy, \z\geq t\vz),
\end{gather}
where $\chi(P)$ is the indicator function which is $1$ whenever the condition $P$ is true, and $0$ otherwise.
We get analogous expressions for the case when a line segment starts
in the plane $x=0$ or $y=0$ instead. Thus the
proportion
   of all line 
segments with length at most $t$ as $R\to\infty$ is 
\begin{small}
  \begin{align}
  	\lim_{R\to\infty} \dfrac{N_{R,p,v}(t)}{N_{R,p,v}} &=
  	\dfrac{\frac\vx\x}{\frac\vx\x+\frac\vy\y+\frac\vz\z} \left(1-\dfrac{(\y-t\vy)(\z-t\vz)}{\y\z}\chi(\x\geq t\vx, \y\geq t\vy, \z\geq t\vz)\right)+\\
  	&\phantom{=.}\dfrac{\frac\vy\y}{\frac\vx\x+\frac\vy\y+\frac\vz\z} \left(1-\dfrac{(\x-t\vx)(\z-t\vz)}{\x\z}\chi(\x\geq t\vx, \y\geq t\vy, \z\geq t\vz)\right)+\\
  	&\phantom{=.}\dfrac{\frac\vz\z}{\frac\vx\x+\frac\vy\y+\frac\vz\z} \left(1-\dfrac{(\x-t\vx)(\y-t\vy)}{\x\y}\chi(\x\geq t\vx, \y\geq t\vy, \z\geq t\vz)\right) \label{f_from_convergence_in_distribution}
  \end{align}
\end{small}
which can be written
\begin{multline}
\label{integrand_numerator}
  	1-\dfrac{\chi(\x\geq t\vx, \y\geq t\vy, \z\geq t\vz)}{ \x\y\z
          (\frac\vx\x+\frac\vy\y+\frac\vz\z)} \times \\ \times
    \bigg(
  		\vx(\y-t\vy)(\z-t\vz)+
  		\vy(\x-t\vx)(\z-t\vz)+
  		\vz(\x-t\vx)(\y-t\vy)
  	\bigg).
\end{multline}
Recognizing that both integrands \eqref{integrand_denominator} and \eqref{integrand_numerator} are independent of the position $p$, we see that the limit of \eqref{cdf_integral} as $R\to\infty$ may be written as
\newcommand{\vint}[3]{ \int_{\substack{
			v\in \mathbb S^2_+\\
			\vx\leq #1/t \\
			\vy\leq #2/t \\
			\vz\leq #3/t   }} }
\begin{multline}
\label{cdf_formula}
  \lim_{R\to\infty}\lim_{M\to\infty} \op{cdf}_{X_{M,R}}(t) =
	1-
	\dfrac1
	{ \int_{\mathbb S^2_+}
		(\vx\y\z+\x\vy\z+\x\y\vz)
	\dd S(v) }\times\\
	\times\int_{\substack{
			v\in \mathbb S^2_+\\
			\vx\leq \x/t \\
			\vy\leq \y/t \\
			\vz\leq \z/t   }}
		(
		(\x\y\vz+\x\vy\z+\vx\y\z)
		-2t(\x\vy\vz+\vx\y\vz+\vx\vy\z)
		+3t^2\vx\vy\vz
	)
	\dd S(v)
\end{multline}
for all $t>0$.
The corresponding formula in $n$ dimensions is given by
  \begin{gather} \label{thm_n_formula}
    \lim_{R\to\infty}\lim_{M\to\infty} \op{cdf}_{X_{M,R}}(t) =
    1-
    \dfrac{
      \displaystyle \int_{\substack{
      v\in \mathbb S^{n-1}_+\\
      v_i \leq \x_i/t \\ \text{ for }i=1,\ldots,n   }}
        \left(
          \sum_{i=1}^n
            v_i \prod_{j\neq i}(a_i-tv_j)
        \right)
      \dd S(v)
    }
    {\displaystyle \left(\prod_{i=1}^n \x_i\right) \int_{\mathbb S^{n-1}_+}
      \left( \sum_{i=1}^n \frac{v_i}{\x_i} \right)
    \dd S(v) }
  \end{gather}
for all $t>0$, where the side-lengths of the box $K$ are
$\x_1,\ldots,\x_n$ and $\dd S$ is the surface area measure on $\mathbb
S^{n-1}_+\cap [0,\infty)^n$. (The denominator can be given explicitly by
using Lemma \ref{integral_of_vn} below.) 

We have thus proved that the random variable $X_{M,R}$ converges
in distribution to a random variable with probability density function
given by \eqref{thm_n_formula} as we take $M\to\infty$ followed by
taking $R\to\infty$, or alternatively, first taking $R\to\infty$ followed by taking $M\to\infty$.
It remains to prove that this distribution agrees with
the distribution of the random variable $X$ defined in the
introduction. 

\subsection{Integral geometry}
We start by recalling some standard facts from integral geometry (cf.
\cite{santalo-book,klain-rota-integral-geometry-book}.)
The set of directed straight lines $\ell$ in $\R^3$ can be
parametrized by pairs $(v,q)$ where $v\in \mathbb S^2$ is a unit
vector pointing in the same direction as $\ell$ and $q\in v^\bot$ is
the unique point in $\ell$ which intersects the plane through the
origin which is orthogonal to $v$. The unique translation- and
rotation-invariant measure (up to a constant) on the set of directed
straight lines in $\R^3$ is $\dd \ell\defeq \dd A(q) \dd S(v)$ where
$\dd A$ is the surface measure on the plane through the origin
orthogonal to $v\in \mathbb S^2$, and $\dd S$ is the surface area
measure on $\mathbb S^2$. 

Consider the set $L_{\x,\y,\z}$ of directed straight lines in $\R^3$
which intersect the box $K$. Now, since $\x\y\vz+\x\vy\z+\vx\y\z$ is
the area of the projection 
of the box $K$ onto the plane $v^\bot$ for
$v\in \mathbb S^2_+$, it follows that the total measure of $L_{a,b,c}$ with
respect to $\dd \ell$ is
\begin{gather}
	C_{\x,\y,\z} \defeq 8\int_{\mathbb
          S^2_+}(\x\y\vz+\x\vy\z+\vx\y\z)\dd S(v) = 2\pi
        (\x\y+\x\z+\y\z) 
\end{gather}
where we used symmetry, and the integral may be evaluated by switching to spherical coordinates. It follows that $\dd \ell / C_{\x,\y,\z}$ is a probability measure on the set of directed lines intersecting the box $L_{a,b,c}$. Let $\ell$ be a random directed line with respect to this measure, and define the random variable $X\defeq \operatorname{length}(\ell\cap K)$, as in the introduction. Let us determine the probability that $X\leq t$ for an arbitrary parameter $t\geq 0$. By symmetry it suffices to consider only directed lines with $v\in \mathbb S^2_+$.
The set of all intersection points between the rectangle
$[0,\x]\times[0,\y]\times\{0\}$ and the lines $\ell$ with $X\leq t$
and direction $v\in \mathbb S^2_+$ has area
$\x\y-(\x-t\vx)(\y-t\vy)\chi(\x\geq t\vx,\y\geq t\vy,\z\geq t\vz)$, as
in \eqref{prevcalc}, and its projection onto the plane $v^\bot$ has
area 
\begin{gather}
	\vz\left[\x\y-(\x-t\vx)(\y-t\vy)\chi(\x\geq t\vx,\y\geq t\vy,\z\geq t\vz)\right].
\end{gather}
By symmetry it follows that the area of the set of directed lines
$\ell\in L_{\x,\y,\z}$ with $X\leq t$ and direction $v\in \mathbb
S^2_+$ projected down to $v^\bot$ is 
\begin{align}
	U(v,t) &\defeq \vx\left[\y\z-(\y-t\vy)(\z-t\vz)\chi(\x\geq t\vx,\y\geq t\vy,\z\geq t\vz)\right] + \\
	&\phantom{\defeq.}\vy\left[\x\z-(\x-t\vx)(\z-t\vz)\chi(\x\geq t\vx,\y\geq t\vy,\z\geq t\vz)\right] + \\
	&\phantom{\defeq.} \vz\left[\x\y-(\x-t\vx)(\y-t\vy)\chi(\x\geq t\vx,\y\geq t\vy,\z\geq t\vz)\right],
\end{align}
and it follows that
\begin{gather}
	\operatorname{Prob}[X\leq t] = \dfrac1{C_{\x,\y,\z}} \int_{X\leq t}\dd \ell =
		\dfrac 8{C_{\x,\y,\z}}\int_{\mathbb S^2_+}
		U(v,t)
		\dd S(v),
\end{gather}
which we see is identical to \eqref{cdf_formula}, and we have thus proved that $X_{M,R}$ converges in distribution to $X$ as we take $M\to\infty$ and then $R\to\infty$. This concludes the proof of Theorem \ref{thm_n}. 

\pp {Computing the mean value} 
\label{sec:integral-geometry-mean-value}
We will determine the mean value \eqref{santalo-formula} of $X$; to do this we
exploit the integral geometry interpretation of the random
variable $X$.  By symmetry it suffices to restrict to directed lines
$\ell$ with $v\in \mathbb S^2_+$.  For fixed $v\in \mathbb S^2_+$,
denote by $Q(v)=(K+\operatorname{span}(v))\cap v^\bot$ the set of
$q\in v^\bot$ such that the directed line $\ell$ parametrized by
$(v,q)$ intersects $K$.  We note that $X\dd A(q)$ is a volume element
of the box $K$ for any fixed $v\in \mathbb S^2_+$, and thus
integrating $X\dd A(q)$ over all $q$ yields the volume of the box.
Hence the mean value is 
\begin{gather}
	\E{X}=
	\dfrac8{C_{\x,\y,\z}}
	\int_{\mathbb S^2_+}\int_{Q(v)}X\dd A(q)\dd S(v) =
	\dfrac{8\x\y\z} {C_{\x,\y,\z}}
	\int_{\mathbb S^2_+} \dd S(v) =
	\dfrac{2\x\y\z}{ \x\y+\x\z+\y\z}.
\end{gather}
In $n$ dimensions we get a normalizing factor
$\frac{\operatorname{Area}(K)}2 \cdot 2^n\int_{\mathbb S^{n-1}_+} v_n\dd
S(v)$, so with the aid of the Lemma \ref{integral_of_vn} in the
Appendix, it follows that the mean value in $n$ dimensions is 
\begin{gather}
  \E{X} =
  \dfrac1{2^n   \dfrac1{\pi}\dfrac{|\mathbb S^n|}{2^{n}}
    \frac{\operatorname{Area}(K)}2  }
  2^n\operatorname{Vol}(K)\dfrac{|\mathbb S^{n-1}|}{2^n} = 2\pi
  \dfrac{|\mathbb S^{n-1}|}{|\mathbb
    S^n|}\dfrac{\operatorname{Vol}(K)}{\operatorname{Area}(K)}
\end{gather}
where $\operatorname{Area}(K)$ is the $(n-1)$-dimensional surface area
of the box $K$, and $\operatorname{Vol}(K)$ is the volume of the box
$K$. 

\section{Proof of Theorem \ref{thm_2}}
Using formula \eqref{thm_n_formula} in dimension $n=2$, we
get
\begin{gather}
      \label{eq:pdf-singularity}
    \op{cdf}_X(t) =
    1-
    \dfrac{
      \displaystyle \int_{\substack{
      v\in \mathbb S^1_+\\
      \vx \leq \x/t \\
      \vy \leq \y/t}}
        \left(
          \vx (\y-t\vy) +
          \vy (\x-t\vx)
        \right)
      \dd S(v)
    }
    {\displaystyle \x\y \int_{\mathbb S^1_+}
      \left( \frac{\vx}\x+\frac{\vy}\y \right)
      \dd S(v) }.
\end{gather}

We use polar coordinates $\vx=\cos\theta, \vy=\sin\theta$ so that $\dd
S(v)=\dd\theta$. Then the above becomes 
\begin{gather}
    1-
    \dfrac{
      \displaystyle
      \int_{ \cos\inv (\min(a/t, 1)) } ^ { \sin\inv (\min(b/t, 1)) }
        \left(
          \y\cos\theta+\x\sin\theta-2t\sin\theta\cos\theta
        \right)
      \dd \theta
    }
    {\displaystyle \int_{0}^{\pi/2}
      \left( \y\cos\theta + \x\sin\theta \right)
    \dd \theta } = \\
    1-
    \frac1{\x+\y}
      \left[
        \y\sin\theta-\x\cos\theta+t\cos^2\theta
      \right]
      _{ \cos\inv (\min(a/t, 1)) } ^ { \sin\inv (\min(b/t, 1)) }. \label{two_easy}
    \end{gather}
The numerator of the second term may be written
\begin{gather}
  \chi(\y< t)\left( \y\cdot \frac \y t-\x\sqrt{1-\frac{\y^2}{t^2}}+t\left(1-\frac{\y^2}{t^2}\right) \right) + 
  \chi(\y\geq t)\left( \y - \x\cdot 0 + t\cdot 0  \right) + \\
  -\chi(\x< t)\left( \y \sqrt{1-\frac{\x^2}{t^2}} - \x\cdot\frac{\x}t + t\cdot\frac{\x^2}{t^2} \right)
  -\chi(\x \geq t)\left( \y\cdot 0 - \x+t \right)
 \end{gather}
which can be simplified to
 \begin{gather}
  \chi(\y< t)\left( t-\y-\x\sqrt{1-\frac{\y^2}{t^2}} \right) + 
  \chi(\x< t)\left( t - \x-\y \sqrt{1-\frac{\x^2}{t^2}}  \right) + 
  \left(  \x+\y-t \right).
\end{gather}
Inserting this into \eqref{two_easy} and differentiating yields Theorem \ref{thm_2}.

\section{Proof of Theorem \ref{thm_3}}
We will evaluate the cumulative distribution function \eqref{cdf_formula} and then differentiate.
The denominator of the second term of \eqref{cdf_formula} is
\begin{gather}
	\int_{\mathbb S^2_+}(\x\y\vz+\x\vy\z+\vx\y\z)\dd S(v) =
        \dfrac{\pi}4 (\x\y+\x\z+\y\z), 
\end{gather}
as may be evaluated by switching to spherical coordinates.
Define
\begin{gather}
	f(\x,\y,\z) \defeq \y\z \vint\x\y\z \vx \dd S(v),\\
	g(\x,\y,\z) \defeq -2t\z \vint\x\y\z \vx\vy \dd S(v),\\
	h(\x,\y,\z) \defeq 3t^2 \vint\x\y\z \vx\vy\vz \dd S(v).
\end{gather}
By symmetry, we have
\begin{gather}
	f(\z,\x,\y) =
	\x\y \vint\z\x\y \vx \dd S(v) =
	\x\y \vint\x\y\z \vz \dd S(v),\\
	f(\y,\z,\x) =
	\x\z \vint\y\z\x \vx \dd S(v) =
	\x\z \vint\x\y\z \vy \dd S(v),\\
	g(\z,\x,\y) =
	-2t\y \vint\z\x\y \vx\vy \dd S(v) =
	-2t\y \vint\x\y\z \vx\vz \dd S(v), \\
	g(\y,\z,\x) =
	-2t\x \vint\y\z\x \vx\vy \dd S(v) =
	-2t\x \vint\x\y\z \vy\vz \dd S(v),
\end{gather}
and thus we can write the numerator in the second term of \eqref{cdf_formula} as
\begin{gather}
	f(\x,\y,\z)+
	f(\z,\x,\y)+
	f(\y,\z,\x)+
	g(\x,\y,\z)+
	g(\z,\x,\y)+
	g(\y,\z,\x)+
	h(\x,\y,\z).
\end{gather}
Exploiting the symmetries, it suffices to evaluate $h(\x,\y,\z), g(\x,\y,\z)$ and $f(\y,\z,\x)$ (note the order of the arguments to $f$). 
We will evaluate these integrals by switching to spherical coordinates, but first we need to parametrize the part of the sphere inside the box $0\leq \vx\leq \x/t, 0\leq \vy\leq \y/t, 0\leq \vz\leq \z/t$.

\newcommand{\thetamin}{{\theta_{\text{min}}}}
\newcommand{\thetamax}{{\theta_{\text{max}}}}
\begin{Lemma} \label{param_lemma}
  Fix $t\in(0,\sqrt{\x^2+\y^2+\z^2})$. We have
  \begin{gather}
    \vint\x\y\z F(\vx,\vy,\vz) \dd S(v) = \\
      \left(
        \int_\thetamin^{\theta_\x}\int_0^{\pi/2} +
        \int_{\theta_\x}^{\thetamax} \int_{\varphi_\x}^{\pi/2} -
        \int_{\theta_\y}^\thetamax\int_{\varphi_\y}^{\pi/2}
      \right) \tilde F(\theta,\varphi) \sin\theta \dd\varphi\dd\theta
  \end{gather}
  for any integrable function $F: \mathbb S^2_+\to\R$, where $\tilde
  F(\theta,\varphi) \defeq F(\sin\theta\cos\varphi,
  \sin\theta\sin\varphi, \cos\theta)$, where 
  \begin{align}
    \thetamin &\defeq \acos\fp{\frac \z t}, \\
    \theta_\x &\defeq \max(\thetamin, \asin\fp{\frac \x t}), \\
    \theta_\y &\defeq \max(\thetamin, \asin\fp{\frac \y t}), \\
    \thetamax &\defeq \asin\fp{\dfrac{\sqrt{\x^2+\y^2}}t}, \\
    \varphi_\x &\defeq \acos{\frac \x{t\sin\theta}}\qquad(\text{whenever }\x\leq t\sin\theta),\\
    \varphi_\y &\defeq \asin{\frac \y{t\sin\theta}}\qquad(\text{whenever }\y\leq t\sin\theta).
  \end{align}
  and where we have used the shorthand $\fp{u}\defeq\min(u,1)$.
\end{Lemma}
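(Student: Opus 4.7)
The plan is to pass to spherical coordinates on $\mathbb S^2_+$, translate each of the three box constraints $\vx \le \x/t$, $\vy \le \y/t$, $\vz \le \z/t$ into constraints on $(\theta,\varphi)$, and then express the admissible region by inclusion--exclusion: apply the $\vx$ constraint first (ignoring $\vy$), then subtract the portion that violates $\vy \le \y/t$. Since $F$ enters only as an integrand, the entire content of the lemma is this signed decomposition of the region.

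First I would parametrize $\mathbb S^2_+$ by $\vx = \sin\theta\cos\varphi$, $\vy = \sin\theta\sin\varphi$, $\vz = \cos\theta$ with $(\theta,\varphi) \in [0,\pi/2]^2$, so that $\dd S(v) = \sin\theta\, \dd\varphi\, \dd\theta$. The constraint $\vz \le \z/t$ then becomes $\theta \ge \thetamin$. The constraint $\vx \le \x/t$ is automatic precisely when $\sin\theta \le \x/t$, i.e.\ when $\theta \le \asin\fp{\x/t}$, and otherwise is equivalent to $\varphi \ge \varphi_\x$; the constraint on $\vy$ is analogous and reads $\varphi \le \varphi_\y$. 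A direct computation (squaring and adding the two bounds on $\cos\varphi$ and $\sin\varphi$) shows that these two $\varphi$-constraints are jointly satisfiable only when $t^2\sin^2\theta \le \x^2+\y^2$, i.e.\ $\theta \le \thetamax$; the same algebra yields the crucial monotonicity $\varphi_\x \le \varphi_\y$ whenever both are defined and $\theta \le \thetamax$.

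Next I would handle the $\vx$ constraint alone, ignoring $\vy$. The $\max$ in $\theta_\x = \max(\thetamin, \asin\fp{\x/t})$ absorbs the edge cases $\x/t \ge 1$ and $\asin(\x/t) \le \thetamin$ uniformly. On $[\thetamin, \theta_\x]$ the $\vx$ bound is automatic, so $\varphi$ sweeps $[0, \pi/2]$; on $[\theta_\x, \thetamax]$ it forces $\varphi \in [\varphi_\x, \pi/2]$. This gives the first two terms of the claimed identity.

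Finally I would subtract from this set the portion where the $\vy$ bound fails, i.e.\ where $\sin\theta\sin\varphi > \y/t$. This occurs precisely when $\theta > \theta_\y$ and $\varphi > \varphi_\y$, yielding the third term $-\int_{\theta_\y}^{\thetamax}\int_{\varphi_\y}^{\pi/2}$. The subtle point, and the main technical obstacle, is to verify that this subtraction is exact: for $\theta \in [\max(\theta_\x,\theta_\y), \thetamax]$ the strip $\varphi \in [\varphi_\y, \pi/2]$ must lie inside the previously included strip $[\varphi_\x, \pi/2]$, which is exactly the monotonicity $\varphi_\x \le \varphi_\y$ established earlier. A short case analysis over the orderings of $\thetamin$, $\asin\fp{\x/t}$, $\asin\fp{\y/t}$, $\thetamax$ (covering in particular the boundary cases when some of $\x/t, \y/t, \z/t$ exceed $1$, and using $t < \sqrt{\x^2+\y^2+\z^2}$ to ensure $\thetamin \le \thetamax$) then completes the proof.
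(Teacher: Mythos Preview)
Your proposal is correct and follows essentially the same route as the paper: pass to spherical coordinates, translate the three box constraints into $\theta \ge \thetamin$ and $\varphi_\x \le \varphi \le \varphi_\y$, use $t<\sqrt{\x^2+\y^2+\z^2}$ to get $\thetamin\le\thetamax$, and then decompose the $\varphi$-interval. The only cosmetic difference is presentational: the paper first writes the honest iterated integral $\int_{\thetamin}^{\thetamax}\int_{\acos\fp{\x/(t\sin\theta)}}^{\asin\fp{\y/(t\sin\theta)}}$, splits on whether $\theta_\x\le\theta_\y$ or $\theta_\y<\theta_\x$, and in each case algebraically regroups $\int_{\varphi_\x}^{\varphi_\y}=\int_{\varphi_\x}^{\pi/2}-\int_{\varphi_\y}^{\pi/2}$ to reach the three-term form, whereas you frame the same manipulation as inclusion--exclusion from the outset; the key technical point (your $\varphi_\x\le\varphi_\y$ for $\theta\le\thetamax$) is exactly the paper's observation that the $\varphi$-interval is nonempty precisely on $[\thetamin,\thetamax]$.
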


\begin{proof}
We will parametrize the set of points $v=(\vx,\vy,\vz)$ on the sphere
$\mathbb S^2$ such that 
\begin{gather}
	0< \vx\leq \x/t,\\
	0< \vy\leq \y/t, \label{conds} \\
	0< \vz\leq \z/t.
\end{gather}
Switch to spherical coordinates $\vx=\sin\theta\cos\varphi, \vy=\sin\theta\sin\varphi, \vz=\cos\theta$. The non-negativity conditions of \eqref{conds} are equivalent to the condition $\theta,\varphi\in(0,\pi/2)$. For such angles, the condition $\vz\leq \z/t$ is equivalent to
\begin{gather}
	\acos\fp{\frac \z t} \leq \theta,
\end{gather}
and the conditions $\vx\leq \x/t, \vy\leq \y/t$ are equivalent to
\begin{gather}
	\acos\fp{\dfrac \x{t\sin\theta}} \leq \varphi \leq \asin\fp{\dfrac \y{t\sin\theta}}. \label{theta_interval}
\end{gather}
The interval \eqref{theta_interval} is non-empty for precisely those $\theta\in(0,\pi/2)$ such that $\theta\leq \thetamax$ since
\begin{gather}
  1 \leq \fp{\dfrac \x{t\sin\theta}}^2 + \fp{\dfrac \y{t\sin\theta}}^2 \iff
  1 \leq \left({\dfrac \x{t\sin\theta}}\right)^2 + \left({\dfrac \y{t\sin\theta}}\right)^2 \iff \\
  \sin\theta \leq \dfrac{\sqrt{\x^2+\y^2}}t  \iff \theta \leq \asin\fp{\dfrac{\sqrt{\x^2+\y^2}}t}.
\end{gather}
Thus we may restrict $\theta$ to the interval given by the inequalities
\begin{gather}
  \thetamin
  \leq  \theta \leq 
  \thetamax.
\end{gather}
Note that we have $\thetamin \leq \thetamax$ for all $t\leq\sqrt{\x^2+\y^2+\z^2}$ since
\begin{gather}
  \thetamin \leq \thetamax \iff
  1 \leq \fp{\frac \z t}^2 + \fp{\frac{\sqrt{\x^2+\y^2}}t}^2 \iff \\
  1 \leq \left({\frac \z t}\right)^2 + \left({\frac{\sqrt{\x^2+\y^2}}t}\right)^2 \iff
  t^2 \leq \x^2+\y^2+\z^2.
\end{gather}
We conclude that we can write
\begin{gather}
	\vint\x\y\z F(\vx,\vy,\vz) \dd S(v) =
	\int_{\thetamin}^{\thetamax} \int_{\acos\fp{\frac \x{t\sin\theta}}}^{ \asin\fp{\frac \y{t\sin\theta}} }
		\tilde F(\theta,\varphi) \sin\theta \dd\varphi\dd\theta. \label{integral_reparametrization}
\end{gather}

For $\theta\in(0,\pi/2)$, note that $\acos{\frac \x{t\sin\theta}}$ is defined  precisely when
$\asin\fp{\frac \x t}\leq \theta$
and that $\asin{\frac \y{t\sin\theta}}$ is defined precisely when
$\asin\fp{\frac \y t}\leq \theta$.
  We have $\thetamin< \theta_\x$ if and only if $t< \sqrt{\x^2+\z^2}$, and we have $\thetamin<\theta_\y$ if and only if $t<\sqrt{\y^2+\z^2}$. Moreover we note that we always have $\theta_\x, \theta_\y \in [\thetamin, \thetamax]$.

Let us rewrite the integration limits in the right-hand side of \eqref{integral_reparametrization} in terms of $\varphi_\x$ and $\varphi_\y$.
A priori, we need to distinguish between the two cases $\theta_\x\leq \theta_\y$ and $\theta_\y< \theta_\x$. If $\theta_\x\leq \theta_\y$ then we get
\begin{gather}
  \left(\int_{\thetamin}^{\thetamax} \int_{\acos\fp{\frac x{t\sin\theta}}}^{ \asin\fp{\frac y{t\sin\theta}} }\right) =
	\left(
		\int_\thetamin^{\theta_\x}\int_0^{\pi/2} +
		\int_{\theta_\x}^{\theta_\y} \int_{\varphi_\x}^{\pi/2} +
		\int_{\theta_\y}^\thetamax\int_{\varphi_\x}^{\varphi_\y}
	\right) = \\ 
	\left(
		\int_\thetamin^{\theta_\x}\int_0^{\pi/2} +
		\int_{\theta_\x}^{\thetamax} \int_{\varphi_\x}^{\pi/2} -
		\int_{\theta_\y}^{\thetamax} \int_{\varphi_\x}^{\pi/2} +
		\int_{\theta_\y}^\thetamax\int_{\varphi_\x}^{\pi/2} -
		\int_{\theta_\y}^\thetamax\int_{\varphi_\y}^{\pi/2}
	\right) = \\ 
	\left(
		\int_\thetamin^{\theta_\x}\int_0^{\pi/2} +
		\int_{\theta_\x}^{\thetamax} \int_{\varphi_\x}^{\pi/2} -
		\int_{\theta_\y}^\thetamax\int_{\varphi_\y}^{\pi/2}
	\right). 
 \label{a_priori_first_case}
\end{gather}
If on the other hand $\theta_\y<\theta_\x$ then 
\begin{gather}
  \left(\int_{\thetamin}^{\thetamax} \int_{\acos\fp{\frac x{t\sin\theta}}}^{ \asin\fp{\frac y{t\sin\theta}} }\right) =
	\left(
		\int_\thetamin^{\theta_\y}\int_0^{\pi/2} +
		\int_{\theta_\y}^{\theta_\x} \int_0^{\varphi_\y} +
		\int_{\theta_\x}^\thetamax\int_{\varphi_\x}^{\varphi_\y}
	\right) = \\ 
	\left(
		\int_\thetamin^{\theta_\y}\int_0^{\pi/2} +
		\int_{\theta_\y}^{\theta_\x} \int_0^{\pi/2} -
		\int_{\theta_\y}^{\theta_\x} \int_{\varphi_\y}^{\pi/2} +
		\int_{\theta_\x}^\thetamax\int_{\varphi_\x}^{\pi/2} -
		\int_{\theta_\x}^\thetamax\int_{\varphi_\y}^{\pi/2}
	\right)
\end{gather}
which we see is identical to \eqref{a_priori_first_case}. Combining \eqref{integral_reparametrization} and \eqref{a_priori_first_case} we get the conclusion of the lemma.
\end{proof}

Applying Lemma \ref{param_lemma} 
we get
\begin{gather}
	h(\x,\y,\z) = 3t^2 \vint\x\y\z \vx\vy\vz \dd S(v) = \\
	3t^2
	\left(
		\int_\thetamin^{\theta_\x}\int_0^{\pi/2} +
		\int_{\theta_\x}^{\thetamax} \int_{ \varphi_\x }^{\pi/2} -
		\int_{\theta_\y}^\thetamax\int_{ \varphi_\y }^{\pi/2}
	\right)
		(\sin^2\theta\cos\theta\cos\varphi\sin\varphi) \sin\theta
	\dd\varphi\dd\theta.
\end{gather}
An antiderivative of the integrand $\cos\varphi\sin\varphi \cdot \sin^3\theta\cos\theta$ with respect to $\varphi$ is
$-\frac{1}{2} \cos ^2\varphi \sin ^3\theta \cos \theta$, and thus the above is
\begin{gather}
	3t^2
	\left(
		\int_\thetamin^{\theta_\x}
			\left. \cos ^2\varphi \right|_{\varphi=0} +
		\int_{\theta_\x}^{\thetamax}
			\left. \cos ^2\varphi \right|_{\varphi=\varphi_\x} -
		\int_{\theta_\y}^\thetamax
			\left. \cos ^2\varphi \right|_{\varphi=\varphi_\y}
	\right)
		\frac12 \sin ^3\theta \cos \theta
	\dd\theta = \\
	3t^2
	\left(
		\int_\thetamin^{\theta_\x}
			1 +
		\int_{\theta_\x}^{\thetamax}
			\dfrac{a^2}{t^2\sin^2\theta} +
		\int_{\theta_\y}^\thetamax
			\left(\dfrac{b^2}{t^2\sin^2\theta}-1\right)
	\right)
		\frac12 \sin ^3\theta \cos \theta
	\dd\theta = \\
	\frac32
	\left(
		\int_\thetamin^{\theta_\x}
			t^2\sin ^3\theta \cos \theta \dd\theta +
		\int_{\theta_\x}^{\thetamax}
			a^2 \sin \theta \cos \theta \dd\theta +
		\int_{\theta_\y}^\thetamax
			\left(b^2\sin\theta - t^2\sin^3\theta \right) \cos \theta \dd\theta
	\right) = \\
	\frac32
	\left(
		\left[
			t^2\frac14\sin ^4\theta
		\right]_\thetamin^{\theta_\x} +
		\left[
			a^2 \frac12 \sin^2 \theta 
		\right]_{\theta_\x}^{\thetamax}+
		\left[
			b^2\frac12\sin^2\theta-t^2\frac14\sin^4\theta
		\right]_{\theta_\y}^\thetamax
	\right). \label{h_explicit}
\end{gather}

Next consider
\begin{gather}
  g(\x,\y,\z) = -2t\z \vint\x\y\z \vx\vy \dd S(v) = \\
  -2t\z
  \left(
    \int_\thetamin^{\theta_\x}\int_0^{\pi/2} +
    \int_{\theta_\x}^{\thetamax} \int_{ \varphi_\x }^{\pi/2} -
    \int_{\theta_\y}^\thetamax\int_{ \varphi_\y }^{\pi/2}
  \right)
    (\sin^2\theta\cos\varphi\sin\varphi) \sin\theta
  \dd\varphi\dd\theta.
\end{gather}
An antiderivative of the integrand $\cos\varphi\sin\varphi \cdot \sin^3\theta$ with respect to $\varphi$ is
$-\frac{1}{2} \cos ^2\varphi \sin ^3\theta$, and thus the above is
\begin{gather}
  g(\x,\y,\z) = -2t\z \vint\x\y\z \vx\vy \dd S(v) = \\
  -t\z
  \left(
    \int_\thetamin^{\theta_\x} \left.\cos ^2\varphi\right|_{\varphi=0} +
    \int_{\theta_\x}^{\thetamax} \left.\cos ^2\varphi\right|_{\varphi=\varphi_\x} -
    \int_{\theta_\y}^\thetamax \left.\cos ^2\varphi\right|_{\varphi= \varphi_\y }
  \right)
    \sin ^3\theta
  \dd\theta = \\
  -t\z
  \left(
    \int_\thetamin^{\theta_\x} 1 +
    \int_{\theta_\x}^{\thetamax} \dfrac{a^2}{t^2\sin^2\theta} +
    \int_{\theta_\y}^\thetamax \left(\dfrac{b^2}{t^2\sin^2\theta}-1\right)
  \right)
    \sin ^3\theta
  \dd\theta = \\
  -t\z
  \left(
    \int_\thetamin^{\theta_\x} \sin ^3\theta\dd\theta +
    \int_{\theta_\x}^{\thetamax} \dfrac{a^2\sin\theta}{t^2}\dd\theta +
    \int_{\theta_\y}^\thetamax \left(\dfrac{b^2\sin \theta}{t^2}-\sin ^3\theta\right)\dd\theta
  \right) = \\
  -t\z
  \left(
    \left[\frac{\cos^3\theta}3-\cos\theta\right]_\thetamin^{\theta_\x}  +
    \dfrac{a^2}{t^2}\left[-\cos\theta\right]_{\theta_\x}^{\thetamax}  +
    \left[-\dfrac{b^2\cos \theta}{t^2}-\frac{\cos^3\theta}3+\cos\theta\right]_{\theta_\y}^\thetamax
  \right). \label{g_explicit}
\end{gather}
We obtain $g(\y,\z,\x)$ and $g(\z,\x,\y)$ by switching the roles of
$\x,\y,\z$ in \eqref{g_explicit}. We remark that trying to obtain
$g(\y,\z,\x)$ and $g(\z,\x,\y)$ directly, by integrating $\vy\vz$ and
$\vx\vz$, respectively, by first integrating with respect to
$\varphi$, taking the limits $\varphi\to\varphi_a$ and
$\varphi\to\varphi_b$, and then finding an antiderivative with respect
to $\theta$, seem to result in much more complicated expressions.

Finally consider
\begin{gather}
	f(\y,\z,\x) =
	\x\z \vint\x\y\z \vy \dd S(v) = \\
	\x\z
	\left(
		\int_\thetamin^{\theta_\x}\int_0^{\pi/2} +
		\int_{\theta_\x}^{\thetamax} \int_{ \varphi_\x }^{\pi/2} -
		\int_{\theta_\y}^\thetamax\int_{ \varphi_\y }^{\pi/2}
	\right)
		(\sin\theta\sin\varphi) \sin\theta
	\dd\varphi\dd\theta.
\end{gather}
An antiderivative of the integrand $\sin\varphi\cdot \sin^2\theta$ with respect to $\varphi$ is $-\cos\varphi\cdot \sin^2\theta$, and thus the above is
\begin{gather}
	\x\z	
	\left(
		\int_\thetamin^{\theta_\x}
			\left. \cos \varphi \right|_{\varphi=0} +
		\int_{\theta_\x}^{\thetamax}
			\left. \cos \varphi \right|_{\varphi=\varphi_\x} -
		\int_{\theta_\y}^\thetamax
			\left. \cos \varphi \right|_{\varphi=\varphi_\y}
	\right)
		\sin ^2\theta 
	\dd\theta = \\
	\x\z	
	\left(
		\int_\thetamin^{\theta_\x}
			1 +
		\int_{\theta_\x}^{\thetamax}
			\dfrac{a}{t\sin\theta} -
		\int_{\theta_\y}^\thetamax
			\sqrt{1-\dfrac{b^2}{t^2\sin^2\theta}}
	\right)
		\sin ^2\theta 
	\dd\theta = \\
	\x\z	
	\left(
		\int_\thetamin^{\theta_\x}
			\sin^2\theta\dd\theta +
		\int_{\theta_\x}^{\thetamax}
			\dfrac{a\sin\theta}{t}\dd\theta -
		\int_{\theta_\y}^\thetamax
			\sqrt{\sin^2\theta-\dfrac{b^2}{t^2}}\sin\theta\dd\theta
	\right)
	= \\
	\x\z	
	\left(
		\frac12\left[
			\theta-\sin\theta\cos\theta
		\right]_\thetamin^{\theta_\x} +
		\left[
			\dfrac{-a\cos\theta}{t}
		\right]_{\theta_\x}^{\thetamax} -
		\int_{\theta_\y}^\thetamax
			\sqrt{1-\dfrac{b^2}{t^2} - \cos^2\theta }\sin\theta \dd\theta
	\right) \label{f_explicit}
\end{gather}
where the last integral inside the parentheses may be written as
\begin{gather}
		\left[
			-\frac{1}{2} \left(\cos\theta \sqrt{1-\dfrac{b^2}{t^2}-\cos^2\theta}+\left(1-\dfrac{b^2}{t^2}\right) \atan \left(\frac{\cos\theta}{\sqrt{1-\dfrac{b^2}{t^2}-\cos^2\theta}}\right)\right)
		\right]_{\theta_\y}^\thetamax = \\
		\left[
			-\frac{1}{2} \left(\cos\theta \sqrt{\sin^2\theta-\dfrac{b^2}{t^2}}+\left(1-\dfrac{b^2}{t^2}\right) \atan \left(\frac{\cos\theta}{\sqrt{\sin^2\theta-\dfrac{b^2}{t^2}}}\right)\right)
		\right]_{\theta_\y}^\thetamax \label{hideous}
\end{gather}
whenever $\theta_\y<\pi/2$, by using the fact that $\frac{1}{2} \left(x \sqrt{c-x^2}+c \atan \left(\frac{x}{\sqrt{c-x^2}}\right)\right)$ is an antiderivative of $\sqrt{c-x^2}$ with respect to $x$ when $c$ is a constant.
We obtain $f(\y,\z,\x)$ and $f(\z,\x,\y)$ by switching the roles of $\x,\y,\z$ in \eqref{f_explicit}.

It remains to insert the limits $\thetamin,\theta_\x,\theta_\y,\thetamax$ into the antiderivatives \eqref{h_explicit}, \eqref{g_explicit} and \eqref{f_explicit} above. Noting that $\thetamin,\theta_\x,\theta_\y,\thetamax$ are expressed in terms of piecewise-defined functions, the following manipulations will be useful.
For any function $\psi$, we have
\begin{gather}
  \psi(\thetamin)
  =\psi\left(\cos\inv\frac\z t\right)\chi_\z + \psi(\cos\inv 1)(1-\chi_\z) \\
  =\left(\psi\left(\cos\inv\frac\z t\right)-\psi(0)\right)\chi_\z + \psi(0)
\end{gather}
where $\chi_\z\defeq \chi(t>\z)$. Similarly,
\begin{gather}
  \psi(\thetamax)
  =\left(\psi\left(\sin\inv \frac{\sqrt{\x^2+\y^2}}t \right)-\psi(\pi/2)\right)\chi_{\x,\y} +
   \psi(\pi/2)
\end{gather}
where $\chi_{\x,\y}\defeq \chi(\sqrt{\x^2+\y^2}>t)$, and
\begin{gather}
  \psi(\theta_\x)=
  (1-\chi_\x)\psi(\pi/2) +
  (\chi_\x-\chi_{\x,\z}) \psi\left(\sin\inv\frac\x t\right) +
  \chi_{\x,\z} \psi\left(\cos\inv\frac\z t\right) \\
  = \chi_{\x,\z}\cdot\left(
    \psi\left(\cos\inv\frac\z t\right)-\psi\left(\sin\inv\frac\x t\right)
  \right)+
  \chi_\x\cdot\left(
    \psi\left(\sin\inv\frac\x t\right)-\psi(\pi/2)
  \right)+
  \psi(\pi/2)
\end{gather}
and similarly, $\psi(\theta_\y)$ can be written as
\begin{gather}
  \chi_{\y,\z}\cdot\left(
    \psi\left(\cos\inv\frac\z t\right)-\psi\left(\sin\inv\frac\y t\right)
  \right)+
  \chi_\y\cdot\left(
    \psi\left(\sin\inv\frac\y t\right)-\psi(\pi/2)
  \right)+
  \psi(\pi/2).
\end{gather}

With this we can evaluate $[\psi]_\thetamin^{\theta_\x}, [\psi]_{\theta_\x}^\thetamax, [\psi]_{\theta_\y}^\thetamax$. But since we know that we will get a function symmetric with respect to the values $\x,\y,\z$, it suffices to keep only those terms with $\chi_\x$ and $\chi_{\x,\y}$, say, and then the other terms may be evaluated by just switching the order of $\x,\y,\z$. 
Upon inserting the limits and differentiating, one obtains (after tedious calculations) that 
  \begin{gather}
    \operatorname{pdf}_X(t) = \dfrac{F(\x,\y,\z,t) + F(\y,\z,\x,t) + F(\z,\x,\y,t)}{3 \pi  t^3 (\x\y+\x\z+\y\z)}
  \end{gather}
  where
  \begin{gather}
    \begin{align}
    F(\x,\y,\z,t) \defeq\ &(8\x t^3-3t^4) + \\
    \chi(t\geq \x)\Bigg(\left(6t^4-\x^4+6 \pi  \x^2 \y \z\right)-&(8 \x t^3-3t^4)-4(\y+\z)\sqrt{\abs{t^2-\x^2}}(\x^2+2t^2)\Bigg)+
    \end{align}\\
    \begin{align}
      \chi(t\geq\sqrt{\x^2+\y^2})\Bigg[
       & \x^4+\y^4-9 t^4-6 \x^2 \y^2+\sqrt{\abs{t^2-\x^2-\y^2} } 4\z\left( \x^2 + \y^2 +2 t^2\right)+\\
       & 4\x\sqrt{\abs{t^2-\y^2}}(\y^2+2t^2)-12 \x^2 \y \z \cdot \arctan\left(\dfrac {\sqrt{\abs{t^2-\x^2-\y^2} }}\y\right)+\\
       & 4\y\sqrt{\abs{t^2-\x^2}}(\x^2+2t^2)-12 \x \y^2 \z \cdot \arctan\left(\frac{\sqrt{\abs{ t^2-\x^2-\y^2} }}{\x}\right) \Bigg].
    \end{align}
  \end{gather}
Rewriting $F$ as a piecewise function, we get Theorem \eqref{thm_3}.

\section{Proof of Theorem \ref{thm_hard}}
Consider the distribution of the random variable
$Y_{M,N}$.  Since we record the {\em same} number of bounces
for each choice of angle $\varphi$ we may replace the $M$-particle system
with a one particle system $Y_{N}$ as follows: randomly select, with
uniform distribution, the angle $\varphi$ and generate $N$ bounce lengths
and randomly select one of these bounce lengths (with uniform
distribution); by the strong law of large numbers, $Y_{M,N}$ converges
in distribution 
to $Y_{N}$ as $M\to\infty$.

We now determine the limit distribution of $Y_{N}$.
As before, we first unfold the motion, and replace motion in a box
with specular reflections on the walls with motion in $\R^2$; see
Figure \ref{unfold_rgb}.  The path lengths between bounces is then
the same as the lengths between the intersections with horizontal or
vertical grid lines.  To understand the spatial distribution, we
project the dynamics to the torus $\R^2/\Lambda$ where $\Lambda$ is
the lattice
\begin{gather}
\Lambda = \{  (n_1\x,n_2\y) : n_1,n_2 \in \Z \},
\end{gather}
and we may identify the torus with the rectangle $[0,\x]\times[0,\y]$.

Let us first consider the motion of a single particle with an arbitrary
initial position, and direction of motion given by an angle $\varphi$.
Taking symmetries into account, we may assume
that $\varphi \in [0,\pi/2]$.  (Note that $\frac{d \varphi}{\pi/2}$
gives a probability measure on these angles.)
If the particle travels a large distance $R>0$, the number of intersections with
horizontal, respectively vertical, grid lines is $\frac{R \sin
  \varphi}{\y} +O(1)$, respectively $\frac{R \cos \varphi}{\x} +O(1)$.
Thus, in the limit $R \to \infty$, the probability of a line segment
beginning at a horizontal (respectively vertical) grid line is given
by $P_{h}$, respectively $P_{v}$ (here we suppress the dependence on
$\varphi$) where
\begin{gather}
P_{h} \defeq \frac{\frac{\sin \varphi}{\y}}{\frac{\sin \varphi}{\y}+\frac{\cos
  \varphi}{\x}}, \quad
P_{v} \defeq \frac{\frac{\cos \varphi}{\x}}{\frac{\sin \varphi}{\y}+\frac{\cos
  \varphi}{\x}}.
\end{gather}
The unfolded flow on the torus is ergodic for almost all $\varphi$,
and thus the starting points of the line segments becomes uniformly
distributed as $R\to\infty$ for almost all $\varphi$.

Let 
\begin{gather}
T = T(\varphi) \defeq \x/\cos \varphi.
\end{gather}
Since $\sin \varphi = \sqrt{T^2-\x^2}/T$, we obtain that
\begin{equation}
  \label{eq:vert-horiz-probs}
P_{h} = \frac{\sqrt{T^2-\x^2}}{\y+\sqrt{T^2-\x^2}}, 
\quad
P_{v} = \frac{\y}{\y+\sqrt{T^2-\x^2}}.
\end{equation}

Let $\theta = \arctan \y/\x$ denote the angle of the diagonal in the
box, and assume that $0 \leq \varphi \leq \theta$.  We then
observe the following regarding the line segment lengths.

First, if the segment begins at a horizontal line, it must end at a
vertical line, and the possible lengths of these segment lie between
$0$ and $T$.  We find that these lengths are uniformly distributed in
$[0,T]$ since the starting points of the segments are uniformly
distributed.

On the other hand, if the line segment begins at a vertical line, it
can either end at a vertical or horizontal line.  Since the
starting points are uniformly distributed, the former happens with probability
\begin{gather}
\frac{\x \tan \varphi}{\y} = 
\frac{\x \frac{\sqrt{T^{2}-\x^{2}}}{\x}}{\y}  = 
\frac{\sqrt{T^{2}-\x^{2}}}{\y}  
\end{gather}
and the length of the segment is again uniformly distributed in
$[0,T]$, whereas the latter happens with probability 
\begin{gather}
\frac{\y - \x \tan \varphi}{\y} = 
1- \frac{\sqrt{T^{2}-\x^{2}}}{\y}  
\end{gather}
in which case the segment is always of length $T$.  

Now, $\varphi \in [0,\theta]$ implies that $T \in
[\x,\sqrt{\x^{2}+\y^{2}}]$, and noting that 
\begin{gather}
\frac{d \varphi}{\dd T} = \frac{\x}{T \sqrt{T^{2}-\x^{2}}}
\end{gather}
we find that the probability of observing a line segment of length $t$
is the sum of a ``singular part'' (the segment begins and ends on
vertical lines; note that all such segments have the {\em same}
lengths) and a ``smooth part'' (the segment does not begin and end on
vertical lines).  Moreover, the smooth part contribution equals
\begin{gather}
\frac{1}{\pi/2}
\int_{\max(\x,t)}^{\sqrt{\x^{2}+\y^{2}}}
\frac{1}{T}
\left(
P_{h} + P_{v} \frac{\x \tan \varphi}{\y}
\right) \frac{d \varphi}{\dd T} \dd T
\end{gather}
which, on inserting (\ref{eq:vert-horiz-probs}), equals
\begin{gather}
  \frac{1}{\pi/2}
  \int_{\max(\x,t)}^{\sqrt{\x^{2}+\y^{2}}}
  \frac{1}{T} \cdot
  \left(
  \frac{\sqrt{T^2-\x^2}}{\y+\sqrt{T^2-\x^2}} + 
  \frac{\y}{\y+\sqrt{T^2-\x^2}} \frac{\x \tan \varphi}{\y}
  \right) \cdot \frac{\x}{T \sqrt{T^{2}-\x^{2}}} \dd T
  =\\
  \frac{1}{\pi/2}
  \int_{\max(\x,t)}^{\sqrt{\x^{2}+\y^{2}}}
  \frac{1}{T} \cdot
  \left(
  \frac{\sqrt{T^2-\x^2}}{\y+\sqrt{T^2-\x^2}} + 
  \frac{\y}{\y+\sqrt{T^2-\x^2}} \frac{\sqrt{T^{2}-\x^{2}}}{\y}  
  \right) \cdot \frac{\x}{T \sqrt{T^{2}-\x^{2}}} \dd T
  =\\
  \frac{1}{\pi/2}
  \int_{\max(\x,t)}^{\sqrt{\x^{2}+\y^{2}}}
  \frac{2\x}{\y+\sqrt{T^2-\x^2}} 
  \cdot \frac{\dd T}{T^{2} } .
\end{gather}

On the other hand, the ``singular part contribution'', provided $t
\geq \x$, to the probability of a segment having length $t$ equals
\begin{gather}
\frac{ P_{v}}{\pi/2} 
 \cdot \frac{\y- \x \tan \varphi}{\y} \cdot \frac{d \varphi}{dt} 
=
\frac{1}{\pi/2} \cdot
\frac{\y}{\y+\sqrt{t^2-\x^2}} \cdot
\left(1- \frac{\sqrt{t^{2}-\x^{2}}}{\y}   \right) \cdot
\frac{\x}{t \sqrt{t^{2}-\x^{2}}}
=\\
\frac{1}{\pi/2} \cdot
\frac{\x}{t  (\y+\sqrt{t^2-\x^2}) \sqrt{t^{2}-\x^{2}} } \cdot
\left(\y - \sqrt{t^{2}-\x^{2}}   \right).
\end{gather}

In case $\theta \leq \varphi \leq \pi/2$, a similar argument (we simple
reverse the roles of $\x$ and $\y$) shows that the smooth contribution
equals
\begin{gather}
\frac{1}{\pi/2}
\int_{\max(\y,t)}^{\sqrt{\x^{2}+\y^{2}}}
\frac{2\y}{\x+\sqrt{T^2-\y^2}} 
\cdot \frac{\dd T}{T^{2} } 
\end{gather}
and that the singular contribution (if $t \geq \y$) equals
\begin{gather}
\frac{1}{\pi/2} \cdot
\frac{\y}{t  (\x+\sqrt{t^2-\y^2}) \sqrt{t^{2}-\y^{2}} } \cdot
\left(\x- \sqrt{t^{2}-\y^{2}}   \right) .
\end{gather}

Thus, if we let $P_{\op{sing}}(t)$ denote the ``singular
contribution'' to the probability density function we find the following: if $t < \x$, then
\begin{gather}
P_{\op{sing}}(t) = 0
\end{gather}
if $t \in [\x,\y]$, then
\begin{gather}
P_{\op{sing}}(t) = 
\frac{1}{\pi/2} \cdot 
\frac{\x \left(\y- \sqrt{t^{2}-\x^{2}}  \right) }
{t  (\y+\sqrt{t^2-\x^2}) \sqrt{t^{2}-\x^{2}} } 
\end{gather}
and if $t \in [\y, \sqrt{\x^{2}+\y^{2}}]$, then
\begin{gather}
P_{\op{sing}}(t)= 
\frac{1}{\pi/2} \cdot \left(
\frac{\x(\y- \sqrt{t^{2}-\x^{2}})}{t  (\y+\sqrt{t^2-\x^2}) \sqrt{t^{2}-\x^{2}} } 
+
\frac{\y(\x- \sqrt{t^{2}-\y^{2}})}{t  (\x+\sqrt{t^2-\y^2}) \sqrt{t^{2}-\y^{2}} } 
\right).
\end{gather}
\begin{Remark}
\label{rem:singularity-explained}
  Note that $P_{\op{sing}}$ has a singularity of type
  $(t-\x)^{-1/2}$ just to the right of $t=\x$ (and similarly just to the right of $t=\y$).
  In a sense this singularity arises from
  the singularity in the change of variables $\varphi \mapsto T$ since
  $\frac{d \varphi}{dT} = \frac{\x}{T \sqrt{T^{2}-\x^{2}}}$.
  The reason for the singularities in the spreading model for $n=2$ is
  similar, as the spreading model can be obtained from the absorption model by a smooth
  change of the angular measure.
\end{Remark}

Similarly, the ``smooth part'' of the contribution is (for
$t \in [0,\sqrt{\x^{2}+\y^{2}}]$) given by
\begin{gather}
P_{\op{smooth}}(t) =
\frac{1}{\pi/2}
\left(
\int_{\max(\x,t)}^{\sqrt{\x^{2}+\y^{2}}}
\frac{2\x}{\y+\sqrt{T^2-\x^2}} 
\cdot \frac{\dd T}{T^{2} } 
+
\int_{\max(\y,t)}^{\sqrt{\x^{2}+\y^{2}}}
\frac{2\y}{\x+\sqrt{T^2-\y^2}} 
\cdot \frac{\dd T}{T^{2} } 
\right)
\end{gather}

Hence the probability density function of the distribution of the segment length $t$ is given by 
\begin{gather}
  \op{pdf}_Y(t) = P_{\op{sing}}(t) +  P_{\op{smooth}}(t).
\end{gather}

We will now evaluate $P_{\op{smooth}}(t)$.
An antiderivative of $\frac{2\x}{\y+\sqrt{T^2-\x^2}} \cdot \frac{1}{T^{2} } $ with respect to $T$ for $T\in(\x,\sqrt{\x^2+\y^2})$ is
\begin{gather}
  \frac{2 \x( \sqrt{T^2-\x^2}- \y)}{T \left(\x^2+\y^2\right)}+\frac{2 \x \y \left(\tanh
   ^{-1}\left(\frac{T}{\sqrt{\x^2+\y^2}}\right)-\tanh ^{-1}\left(\frac{\sqrt{T^2-\x^2}
   \sqrt{\x^2+\y^2}}{T \y}\right)\right)}{\left(\x^2+\y^2\right)^{3/2}} \label{an_explicit_integrand:1}
\end{gather}
where $\tanh\inv(z)=\frac12\log\frac{1+z}{1-z}$ for $\abs z<1$. (A quick calculation shows that $\frac{\sqrt{T^2-\x^2}\sqrt{\x^2+\y^2}}{T \y} < 1$ whenever $\x<T<\sqrt{\x^2+\y^2}$.) We can rewrite \eqref{an_explicit_integrand:1} as
\begin{gather}
  \frac{2 \x( \sqrt{T^2-\x^2}- \y)}{T \left(\x^2+\y^2\right)}+\frac{\x \y 
    \log \left(\frac{\left(\sqrt{\x^2+\y^2}+T\right) \left(T \y-\sqrt{T^2-\x^2}
   \sqrt{\x^2+\y^2}\right)}{\left(\sqrt{\x^2+\y^2}-T\right) \left(T \y + \sqrt{T^2-\x^2} \sqrt{\x^2+\y^2}\right)}\right)
  }{\left(\x^2+\y^2\right)^{3/2}} \label{an_explicit_integrand:2}
\end{gather}
By l'Hôpital's rule we have
\begin{gather}
  \lim_{T\to\sqrt{\x^2+\y^2}+}
  \frac{T \y-\sqrt{T^2-\x^2}\sqrt{\x^2+\y^2}} {\sqrt{\x^2+\y^2}-T } = 
  \lim_{T\to\sqrt{\x^2+\y^2}+}
  \frac{\y-\frac{T}{\sqrt{T^2-\x^2}}\sqrt{\x^2+\y^2}} {-1 } =
  \frac{\x^2}\y
\end{gather}
so the limit of \eqref{an_explicit_integrand:1} as $T\to\sqrt{\x^2+\y^2}+$ is
\begin{gather}
  \frac{\x \y 
    \log \left(
      \left(\frac{\x^2}\y\right)\cdot
    \frac{\left(\sqrt{\x^2+\y^2}+\sqrt{\x^2+\y^2}\right) }{\left(\y \sqrt{\x^2+\y^2}+\y \sqrt{\x^2+\y^2}\right)}\right)
  }{\left(\x^2+\y^2\right)^{3/2}} =
  \frac{2\x \y 
    \log \left( \frac{\x} {\y}\right)
  }{\left(\x^2+\y^2\right)^{3/2}}.
\end{gather}
The limit of \eqref{an_explicit_integrand:1} as $T\to \x+$ is
\begin{gather}
  \frac{-2 \y}{ \left(\x^2+\y^2\right)}+\frac{2 \x \y \tanh^{-1}\left(\frac{\x}{\sqrt{\x^2+\y^2}}\right)}{\left(\x^2+\y^2\right)^{3/2}}.
\end{gather}
Thus, assuming $\x<\y$, we can write $\frac\pi 2P_{\op{smooth}}(t)$ as
\begin{gather}
      \frac{2 (\x+\y)}{ \left(\x^2+\y^2\right)}-\frac{2 \x \y }{\left(\x^2+\y^2\right)^{3/2}}\left(\tanh^{-1}\left(\frac{\x}{\sqrt{\x^2+\y^2}}\right) + \tanh^{-1}\left(\frac{\y}{\sqrt{\x^2+\y^2}}\right)\right)
\end{gather}
if $t<\x,\y$, or as
\begin{gather}
  \frac{2 \x \y +2\x t - 2\x\sqrt{t^2-\x^2}}{t \left(\x^2+\y^2\right)}+\\
  \frac{2 \x \y \left(-\tanh
   ^{-1}\left(\frac{t}{\sqrt{\x^2+\y^2}}\right)+\tanh ^{-1}\left(\frac{\sqrt{t^2-\x^2}
   \sqrt{\x^2+\y^2}}{t \y}\right) - \tanh^{-1}\left(\frac{\y}{\sqrt{\x^2+\y^2}}\right)\right)}{\left(\x^2+\y^2\right)^{3/2}}
\end{gather}
if $\x<t<\y$ or as
\begin{gather}
  2\frac{2\x\y- \x\sqrt{t^2-\x^2} - \y \sqrt{t^2-\y^2}}{t \left(\x^2+\y^2\right)}+ \\
   \frac{2 \x \y \left(-2\tanh
   ^{-1}\left(\frac{t}{\sqrt{\x^2+\y^2}}\right)+\tanh ^{-1}\left(\frac{\sqrt{t^2-\x^2}
   \sqrt{\x^2+\y^2}}{t \y}\right) +
    \tanh ^{-1}\left(\frac{\sqrt{t^2-\y^2}
       \sqrt{\x^2+\y^2}}{t \x}\right)
    \right)}{\left(\x^2+\y^2\right)^{3/2}}
\end{gather}
if $\x,\y<t$. Adding $P_{\op{sing}}(t)$ to this, we get Theorem \ref{thm_hard}.


\appendix

\section{Calculation of an integral}

\begin{Lemma} \label{integral_of_vn}
Write $|\mathbb S^{n-1}|$ for the $(n-1)$-dimensional surface area of the sphere
$\mathbb S^{n-1}\subseteq \R^n$. Then we have 
  \begin{gather}
    \int_{\mathbb S^{n-1}_+} v_n \dd S(v) = \dfrac1{\pi}\dfrac{|\mathbb S^n|}{2^{n}}.
  \end{gather}
where $\mathbb S^{n-1}_+ \defeq \mathbb S^{n-1}\cap (0,\infty)^n$ is the part of the sphere $\mathbb S^{n-1}$ with positive coordinates.
\end{Lemma}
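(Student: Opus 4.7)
Proof proposal. The plan is to reduce to a one-variable integral by slicing the sphere along hyperplanes $v_n = \text{const}$, evaluate that one-dimensional integral in closed form, and finally recognize the resulting expression as $|\mathbb S^n|/(2^n\pi)$ by applying the same slicing idea to $\mathbb S^n \subset \R^{n+1}$.

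First I would parametrize $\mathbb S^{n-1}_+$ by $v_n = \cos\theta$ with $\theta\in(0,\pi/2)$, and $(v_1,\ldots,v_{n-1}) = \sin\theta \cdot u$ with $u \in \mathbb S^{n-2}_+ := \mathbb S^{n-2}\cap (0,\infty)^{n-1}$. Standard spherical slicing then gives
\begin{gather}
\dd S(v) = \sin^{n-2}\theta \dd\theta \dd S(u),
\end{gather}
and since $|\mathbb S^{n-2}_+| = |\mathbb S^{n-2}|/2^{n-1}$ by symmetry, this yields
\begin{gather}
\int_{\mathbb S^{n-1}_+} v_n \dd S(v)
= \frac{|\mathbb S^{n-2}|}{2^{n-1}} \int_0^{\pi/2} \cos\theta \sin^{n-2}\theta \dd\theta
= \frac{|\mathbb S^{n-2}|}{2^{n-1}(n-1)},
\end{gather}
where the last equality uses the elementary antiderivative $\sin^{n-1}\theta/(n-1)$.

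Next I would show that $|\mathbb S^n|/(2^n\pi)$ equals the same quantity. Applying the same slicing to $\mathbb S^n \subset \R^{n+1}$ (foliate by hyperplanes orthogonal to the last coordinate) gives
\begin{gather}
|\mathbb S^n| = |\mathbb S^{n-1}| \int_0^\pi \sin^{n-1}\theta \dd\theta
= 2|\mathbb S^{n-1}| \int_0^{\pi/2} \sin^{n-1}\theta \dd\theta.
\end{gather}
Similarly $|\mathbb S^{n-1}| = 2|\mathbb S^{n-2}| \int_0^{\pi/2} \sin^{n-2}\theta \dd\theta$. Combining, we get $|\mathbb S^n|/(2^n\pi)$ expressed in terms of $|\mathbb S^{n-2}|$ and two Wallis-type integrals; these reduce to $|\mathbb S^{n-2}|/(2^{n-1}(n-1))$ upon invoking the identity
\begin{gather}
\int_0^{\pi/2} \sin^{n-1}\theta \dd\theta \cdot \int_0^{\pi/2} \sin^{n-2}\theta \dd\theta = \frac{\pi}{2(n-1)},
\end{gather}
which is the standard Wallis product formula for consecutive powers (equivalently, a Beta function identity $B(\tfrac{n}{2},\tfrac12) B(\tfrac{n-1}{2},\tfrac12) = \pi/(n-1)$ via $\Gamma(\tfrac12)^2=\pi$ and the duplication formula).

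No step is really an obstacle; the only mildly subtle point is the bookkeeping in the last step, where one must correctly identify the product of two Wallis integrals with $\pi/(2(n-1))$. An alternative that sidesteps this is to substitute the closed-form expressions $|\mathbb S^{k}| = 2\pi^{(k+1)/2}/\Gamma((k+1)/2)$ directly and verify the identity using $\Gamma((n+1)/2) = \tfrac{n-1}{2}\Gamma((n-1)/2)$; this reduces the claim to a one-line computation.
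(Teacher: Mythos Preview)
Your argument is correct. It differs from the paper's proof in execution: the paper writes out the full spherical-coordinate parametrization of $\mathbb S^{n-1}_+$ and observes that the integrand $v_n\,dS_{\mathbb S^{n-1}}$, after adjoining one trivial extra angular variable $\theta_n\in(0,\pi/2)$, coincides exactly with the surface element $dS_{\mathbb S^n}$ of the sphere one dimension up; this gives the result immediately as $\frac{1}{\pi/2}\,|\mathbb S^n_+| = \frac{1}{\pi/2}\cdot\frac{|\mathbb S^n|}{2^{n+1}}$ with no Wallis or Gamma identities required. You instead slice only once, evaluate the resulting one-variable integral explicitly to get $|\mathbb S^{n-2}|/(2^{n-1}(n-1))$, and then separately match this to $|\mathbb S^n|/(2^n\pi)$ via the Wallis product (or the Gamma recursion you mention as an alternative). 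Both routes are fine; the paper's dimension-shift trick is slicker and avoids the special-function bookkeeping entirely, while yours is more computational but makes the appearance of the factor $1/(n-1)$ completely explicit.
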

\begin{proof}
  We may parametrize $v=(v_1,\ldots,v_n)\in \mathbb S^{n-1}_+$ with
  \begin{align}
    v_1 &= \cos \theta_1 \\
    v_2 &= \sin\theta_1 \cos\theta_2 \\
    v_3 &= \sin\theta_1 \sin\theta_2 \cos \theta_3 \\
     &\vdots \\
    v_{n-1} &= \sin\theta_1\cdots \sin\theta_{n-2}\cos\theta_{n-1} \\
    v_n &= \sin\theta_1\cdots \sin\theta_{n-2}\sin\theta_{n-1}
  \end{align}
  for $\theta_1,\ldots,\theta_{n-1}\in(0,\pi/2)$. We have the spherical area element
  \begin{gather}
    \dd S(v) = \sin^{n-2}\theta_1 \sin^{n-3} \theta_2 \cdots \sin \theta_{n-2} \dd{\theta_1}\cdots \dd{\theta_{n-1}}.
  \end{gather}
  Thus we get
  \begin{gather}
    \int_{\mathbb S^{n-1}_+} v_n \dd S(v) = 
    \prod_{i=1}^{n-1} \int_0^{\pi/2} \sin^{n-1-i} \theta_i \dd{\theta_i}.
  \end{gather}
  Introducing an additional integration variable $\theta_n$, we recognize the integrand as the spherical area element in $n+1$ dimensions, and thus the above is
  \begin{gather}
    \dfrac1{\int_0^{\pi/2}\dd{\theta_n}} \prod_{i=1}^{n} \int_0^{\pi/2} \sin^{n-1-i} \theta_i \dd{\theta_i} =
    \dfrac1{\pi/2}\dfrac{|\mathbb S^n|}{2^{n+1}}.
  \end{gather}
  since $\int_{\mathbb S^{n}_+} \dd S(v) = |\mathbb S^n|/2^{n+1}$.
\end{proof}


\end{document}